\documentclass[11pt,english]{smfart}
\usepackage{amsmath, amsthm, amsopn, amsfonts, amssymb}
\usepackage{graphicx}
\usepackage{graphpap}

\setlength\textheight{21cm} \setlength\textwidth{15cm}
\setlength\oddsidemargin{0.5cm} \setlength\evensidemargin{0.3cm}

\usepackage{ifpdf}
\ifpdf
\usepackage{graphicx}  \DeclareGraphicsExtensions{.pdf,.mps}

\else
\usepackage{amsfonts}
\usepackage{graphicx}  \DeclareGraphicsExtensions{.ps,.eps}

\fi


\def\pasdegrille{\let\grille = \pasgrille}
\def\ecriture#1#2{\setbox1=\hbox{#1}
\dimen1= \wd1
\dimen2=\ht1
\dimen3=\dp1
\grille #2 \box1 }
\def\aat#1#2#3{
\divide \dimen1 by 48
\dimen3=\dimen1
\multiply \dimen1 by #1
\advance \dimen1 by -\dimen3
\divide \dimen1 by 101
\multiply \dimen1 by 100
\divide \dimen2 by \count11
\multiply \dimen2 by #2 
\setbox0=\hbox{#3}\ht0=0pt\dp0=0pt
  \rlap{\kern\dimen1 \vbox to0pt{\kern-\dimen2\box0\vss}}\dimen1= \wd1
\dimen2=\ht1}
\def\pasgrille{
\count12= \dimen1 
\divide \count12 by 50
\divide \dimen2 by \count12
\count11 =\dimen2
\ 
\divide \dimen1 by 48
\setlength{\unitlength}{\dimen1}
\smash{\rlap{\ }}
\dimen1= \wd1
\dimen2=\ht1
}
\def\grille{
\count12= \dimen1 
\divide \count12 by 50
\divide \dimen2 by \count12
\count11 =\dimen2
\ 
\divide \dimen1 by 48
\setlength{\unitlength}{\dimen1}
\smash{\rlap{\graphpaper[1](0,0)(50, \count11)}}
\dimen1= \wd1
\dimen2=\ht1
}

\pasdegrille
%
%



\setlength{\marginparwidth}{.8in}
 
\usepackage{epic, eepic}

\pagestyle{headings}


\theoremstyle{plain}

\newtheorem{thm}{Theorem}
\newtheorem{prop}{Proposition}[section]
\newtheorem{cor}[prop]{Corollary}
\newtheorem{lem}[prop]{Lemma}

\newtheorem*{thmcite}{Theorem A}
\theoremstyle{definition}

\newtheorem{rem}{Remark}

\numberwithin{equation}{section}

\title[Global existence]{Global existence for energy
critical waves in $3$-d domains : Neumann boundary conditions}
\author[N. Burq]{Nicolas Burq}
\address{Universit{\'e} Paris Sud,
Math{\'e}matiques,
B{\^a}t 425, 91405 Orsay Cedex, France et Institut Universitaire de France}
\email{Nicolas.burq@math.u-psud.fr}
\author[F. Planchon]{Fabrice Planchon}
\address{ Laboratoire Analyse, G\'eom\'etrie
  \& Applications, UMR 7539, Institut Galil\'ee, Universit\'e Paris
  13, 99 avenue J.B. Cl\'ement, F-93430 Villetaneuse}
\email{fab@math.univ-paris13.fr}

\begin{document}    
\begin{abstract} We prove that the defocusing quintic wave equation,
  with Neumann boundary conditions, is globally wellposed on
  $H^1_N(\Omega) \times L^2( \Omega)$ for any smooth (compact) domain
  $\Omega \subset \mathbb{R}^3$. The proof relies on one hand on $L^p$
  estimates for the spectral projector (\cite{SmSo06}), and on the other hand on
  a precise analysis of the boundary value problem, which turns out to
  be much more delicate than in the case of Dirichlet boundary
  conditions (see~\cite{BLP1}).
\end{abstract}
   
\maketitle
   
\section{Introduction}   
\label{in}
Let $\Omega \in \mathbb{R}^3$ be a smooth bounded domain with
boundary $\partial \Omega$ and $\Delta_N$ the Laplacian acting on
functions with Neumann boundary conditions. In \cite{BLP1} (with G. Lebeau), we derived
Strichartz inequalities for the wave equation from $L^p$ estimates for the associated spectral projector,
obtained recently by H. Smith and C. Sogge~\cite{SmSo06}. As an
application we obtained global well-posedness for the defocusing
energy critical semi-linear wave equation
(with real initial data) in $\Omega $, with Dirichlet boundary
conditions. Here we are interested in the similar question for the
Neumann case,
\begin{equation}\label{eq.NLW}
\begin{aligned}
(\partial_t^2- \Delta) u + u^{5}&=0, \qquad \text{ in } \mathbb{R}_t \times \Omega\\
u _{\mid t=0} = u_0, \qquad \partial_t u_{\mid t=0} &=u_1 , \qquad
\partial_n u
_{\mid \mathbb{R}_t\times \partial \Omega} =0.
\end{aligned}
\end{equation}
Here and thereafter, $\partial_n$ denotes the normal derivative to the
boundary $\partial \Omega$. Equation \eqref{eq.NLW} enjoys the usual conservation of energy
$$ 
E(u)(t) = \int_{\Omega}\Bigl(\frac{ |\nabla u|^2 (t,x) + |\partial_t u|^2(t,x)} 2 + \frac {|u|^{6}(t,x)} {6}\Bigr)dx = E(u) (0)=E_0.
$$
Let $H^1_N$ denote the Sobolev space associated to the Neumann
Laplacian. Our main result reads:
\begin{thm}\label{th.1}
For any $(u_0,u_1) \in H^1_N( \Omega)\times L^2(\Omega)$ there exists
a unique (global in time) solution  $u$ to~\eqref{eq.NLW} in the space
$$ X= C^0( \mathbb{R}_t; H^1_N(\Omega))\cap C^1(\mathbb{R}_t; L^2( \Omega)) \cap L^5_{\text{loc}}(\mathbb{R};L^{10}( \Omega)). $$
\end{thm}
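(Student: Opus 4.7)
The strategy is to prove critical Strichartz estimates for the linear wave equation with Neumann boundary conditions, then combine them with the standard energy-critical non-concentration argument of Struwe--Grillakis--Shatah--Struwe to promote local solutions to global ones. The target linear estimate is
$$
\| v \|_{L^5_t L^{10}_x(I \times \Omega)} \lesssim \| v_0 \|_{H^1_N} + \| v_1 \|_{L^2} + \| F \|_{L^1_t L^2_x(I \times \Omega)}
$$
for solutions of $(\partial_t^2 - \Delta_N) v = F$ with data $(v_0,v_1)$, on any compact interval $I$. The pair $(5,10)$ is the critical admissible pair at the $H^1$ level ($\tfrac15 + \tfrac{3}{10} = \tfrac12$), and the nonlinearity is tamed by the pointwise identity $\|u^5\|_{L^1_t L^2_x} = \|u\|_{L^5_t L^{10}_x}^5$.

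For the linear estimate, I would follow the blueprint of \cite{BLP1}: decompose $e^{\pm it\sqrt{-\Delta_N}}$ via a Littlewood--Paley partition at frequencies $h^{-1}$, apply the Smith--Sogge spectral projector bound on each dyadic block to get $L^2 \to L^q$ estimates with the optimal loss, and feed the result into a $TT^*$ argument against an approximate half-wave parametrix on time intervals of size comparable to $h$. Littlewood--Paley square function estimates then sum the dyadic contributions. The genuine novelty relative to the Dirichlet setting is the behaviour of the parametrix near $\partial \Omega$: the Neumann reflection coefficient is $+1$ rather than $-1$, so the odd-extension/image-method trick is not available, and the phase--amplitude analysis of reflected, glancing and gliding bicharacteristics has to be redone so as to remain compatible with the Smith--Sogge bound.

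Once the linear estimate is secured, local well-posedness on $H^1_N(\Omega) \times L^2(\Omega)$ is routine: a Picard iteration in $X_T = C^0([0,T]; H^1_N) \cap C^1([0,T];L^2) \cap L^5([0,T]; L^{10})$ contracts for $T$ small enough, via the Duhamel formula, Strichartz, and the nonlinear estimate above. This yields a unique maximal solution on $[0,T^*)$ with the blow-up criterion $\|u\|_{L^5_tL^{10}_x([0,T^*)\times\Omega)} = +\infty$; small energy gives $T^* = +\infty$ at once.

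The hardest remaining step is to exclude $T^* < \infty$ at large energy. Energy conservation controls $H^1_N \times L^2$ uniformly in time, but in the critical regime this is not sufficient. I would run the Grillakis/Shatah--Struwe non-concentration argument: by finite speed of propagation, any blow-up would produce concentration of the potential energy in a backward truncated light cone with apex at some $(t_0,x_0) \in \overline{\mathbb{R}_+ \times \Omega}$; a Morawetz-type multiplier identity based at $x_0$ then forces the energy flux through the mantle to vanish, which together with the small-data theory contradicts the blow-up assumption. When $x_0$ lies on $\partial \Omega$, the multiplier must be cut off or deformed tangentially, and one has to check the boundary contribution under the Neumann condition $\partial_n u = 0$ (rather than $u = 0$ as in \cite{BLP1}); this, together with the parametrix construction, is where I expect the bulk of the technical effort to concentrate.
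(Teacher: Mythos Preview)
Your overall architecture (Strichartz $\Rightarrow$ local existence, then non-concentration $\Rightarrow$ global) matches the paper's, but you have misidentified where the work lies. The Strichartz estimate requires \emph{no} new parametrix analysis: Smith--Sogge's spectral projector bound already holds for Neumann conditions, and the passage from spectral projector to $L^5_tL^{10}_x$ Strichartz proceeds verbatim as in the Dirichlet case. There is no need to revisit reflected or glancing rays. The real obstacle is in the non-concentration step, and ``checking the boundary contribution'' understates it. In the Morawetz identity over the backward cone with tip at $x_0\in\partial\Omega$, the boundary term on $\partial\Omega$ is
\[
\int_{\partial\Omega_S^T\cap K_S^T} (n(x)\cdot x)\Bigl(|\partial_t u|^2 - |\nabla u|^2 - \tfrac{u^6}3\Bigr)\,d\sigma\,dt.
\]
Under Neumann conditions this has no sign, and since the uniform Lopatinskii condition fails, $\|\nabla u\|_{L^2(\partial\Omega)}$ is \emph{not} controlled by the energy: the individual pieces of this integrand are genuinely unbounded. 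A tangential cutoff or a deformation of the multiplier does not circumvent this.

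The paper's new idea is that the combination $|\partial_t u|^2 - |\nabla u|^2$ is the null form $Q_0$, and the full integrand, weighted by $n(x)\cdot x = O(|x|^2)$, \emph{can} be bounded by $|S|^2(E_0+E_0^{2/3})$ even though its constituents cannot. The proof constructs a vector field $Z$ tangent to each sphere $\{|x|=-t\}$ and equal on $\partial\Omega$ to $\partial_n$ plus a tangential piece, then studies $\int w(x)\,u\,[\Box,Z]u$ with $w(x)=z(x)\cdot n(z(x))\sim |x|^2$. A long chain of integrations by parts --- exploiting the Neumann condition, the tangency of $Z$ to the cone, and the null structure of $[\Box,Z]$ with respect to the cone normal --- produces exactly the boundary null-form term above, with every other contribution controlled by energy and flux. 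This trace-of-null-form estimate (Proposition~\ref{prop.apriori2}) is the genuinely new content of the paper, and your proposal does not yet contain it.
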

\begin{rem} Smith and Sogge's
  spectral projectors results hold irrespective of Dirichlet or Neumann
  boundary conditions. As such, the local existence result will be
  obtained as in~\cite{BLP1}. However, the non-concentration argument
  requires considerably more care, including non-trivial $L^p$
  estimates for the traces of the solutions on the boundary. We
  provide a self-contained proof of the specific estimates we need,
  which applies to our nonlinear equation. A different set of results in that direction  was announced in \cite{Tataru-trace} in a more general setting for linear wave
  equations.
\end{rem}
\begin{rem} Using the material in this paper, it is rather standard to
  prove existence of global smooth solutions, for smooth initial data
  satisfying compatibility conditions (see
  ~\cite{SmSo95}). Furthermore, our arguments apply equally well to more general defocusing non linearities $f(u)=
  V'(u)$ satisfying
$$
|f(u) | \leq C (1+ |u|^5), \qquad |f'(u)|\leq C (1+ |u|)^4.
$$
Finally, let us remark that our results can be localized
  (in space) and consequently hold also for a non compact domain and in the exterior of any obstacle, and we extend in this framework previous results obtained by Smith and Sogge~\cite{SmSo95} for convex obstacles (and Dirichlet boundary conditions).
\end{rem}
For $s\geq 0$, let us denote by $H^s_N( \Omega)$ the domain of
$(-\Delta_N)^{s/2}$. Finally, , we set $A \lesssim  B$ to mean $A\leq
C B$, where $C$ is an harmless absolute constant (which may change from line to line).
\par \noindent
\section{Local existence}
The local (in time) existence result for~\eqref{eq.NLW} is a direct
consequence of some recent work by Smith and Sogge~\cite{SmSo06}
on the spectral projector defined by $\Pi_\lambda= 1_{\sqrt{-\Delta_N}\in [\lambda, \lambda+1[}$.
\begin{thmcite}[Smith-Sogge~\protect{\cite[Theorem 7.1]{SmSo06}}]\label{th.SmSo} 
Let $\Omega \in \mathbb{R}^3$ be a smooth bounded domain and $\Delta$ be the Laplace operator with Dirichlet or Neumann boundary conditions, then there exists $C>0$ such that for any $\lambda \geq 0$,
\begin{equation}\label{eq.proj}
 \|1_{\sqrt{-\Delta}\in [\lambda, \lambda+1[} u\|_{L^5(\Omega)}\lesssim  \lambda^{\frac 2 5} \|u\|_{L^2(
 \Omega)}.
\end{equation}
\end{thmcite}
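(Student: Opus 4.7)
The plan is to reduce the $L^2\to L^5$ bound on the spectral projector $\Pi_\lambda$ to short-time dispersive estimates on the wave propagator $e^{it\sqrt{-\Delta_N}}$, following the Sogge strategy for compact manifolds, adapted to the boundary setting. First, I would choose $\chi\in C_c^\infty(\mathbb{R})$ with $\chi(0)=1$ and $\widehat{\chi}$ supported in $[-T,T]$ for some small $T>0$ to be chosen later, and use the smooth approximation $\widetilde{\Pi}_\lambda := \chi(\sqrt{-\Delta_N}-\lambda)$. Since $\chi$ majorizes $1_{[0,1)}$ in a Schwartz sense and $\Pi_\lambda$ is a positive operator, bounding $\widetilde{\Pi}_\lambda$ is enough. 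By Fourier inversion,
\[
\widetilde{\Pi}_\lambda = \frac{1}{2\pi}\int_{-T}^{T}\widehat{\chi}(t)\,e^{i\lambda t}\,e^{-it\sqrt{-\Delta_N}}\,dt,
\]
which reduces matters to estimates on the half-wave group for $|t|\leq T$.

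Next I would apply a $TT^*$ argument: the bound $\|\widetilde{\Pi}_\lambda u\|_{L^5}\lesssim\lambda^{2/5}\|u\|_{L^2}$ is equivalent to the dual bound $\|\widetilde{\Pi}_\lambda^* \widetilde{\Pi}_\lambda f\|_{L^5}\lesssim \lambda^{4/5}\|f\|_{L^{5/4}}$. Writing this composition again via the Fourier representation, the kernel becomes an oscillatory integral against the kernel of $e^{-it\sqrt{-\Delta_N}}$ in $t\in[-2T,2T]$. It therefore suffices to establish, for $0<|t|\lesssim T$, a microlocalized dispersive estimate of the form
\[
\bigl\|e^{-it\sqrt{-\Delta_N}}\beta(\sqrt{-\Delta_N}/\lambda)\bigr\|_{L^1\to L^\infty}\lesssim \lambda^{2}\bigl(\lambda |t|\bigr)^{-1},
\]
for a frequency cutoff $\beta$ around scale $\lambda$; interpolating with $L^2\to L^2$ boundedness and integrating in $t$ against $\widehat{\chi}(t)e^{i\lambda t}$ recovers the $\lambda^{4/5}$ factor via stationary phase in $t$ (which is where the Sogge exponent $2/5$ is produced by the $1/t$ dispersive decay).

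The genuine difficulty, and the main obstacle, is proving the short-time dispersive estimate on $\Omega$ with Neumann boundary conditions. Away from the boundary and from glancing rays, I would build a Hadamard/FIO parametrix of Lax type for the half-wave propagator and reduce to a standard stationary phase argument on the canonical relation of $e^{-it\sqrt{-\Delta_N}}$, which for $|t|\leq T$ involves only finitely many reflections. Near the boundary, in the hyperbolic region, one handles the reflected wave by a method-of-images construction adapted to the Neumann condition $\partial_n u=0$ (the image problem is slightly easier than the Dirichlet case on the side of signs, but the geometry is identical). The truly hard zone is the glancing/gliding region, where FIO parametrices degenerate; here I would follow the Smith--Sogge squarefunction/wave-packet approach, using the Melrose--Taylor parametrix or a direct decomposition into coherent states to obtain a $|t|^{-1}$-type decay on localized data, losing the sharp constant only by acceptable factors.

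Finally, once the dispersive estimate is in hand, one combines it with the trivial $L^2\to L^2$ bound on the spectrally localized wave group and performs complex interpolation to recover the $L^{5/4}\to L^5$ norm on the $t$-integrated kernel; the $\lambda^{2/5}$ exponent then emerges from the combination of the frequency cutoff ($\lambda^{2}$ from the dimension) and the oscillatory integration in $t$ over $[-T,T]$. The whole argument is uniform in $\lambda\geq 1$; small $\lambda$ is trivial by Sobolev embedding on the compact set $\Omega$.
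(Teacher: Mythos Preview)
The paper does not prove Theorem~A at all: it is quoted verbatim as an external result of Smith and Sogge~\cite[Theorem~7.1]{SmSo06}, and the paper immediately uses it as a black box to derive Theorem~\ref{th.2}. There is therefore no ``paper's own proof'' against which to compare your proposal.

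As for the proposal itself, the reduction $\Pi_\lambda \rightsquigarrow \chi(\sqrt{-\Delta}-\lambda) \rightsquigarrow$ short-time half-wave propagator, followed by $TT^*$, is exactly Sogge's original strategy on boundaryless manifolds and is a sound starting point. The substantive gap is in the last step: the frequency-localized dispersive bound $\|e^{-it\sqrt{-\Delta_N}}\beta(\sqrt{-\Delta_N}/\lambda)\|_{L^1\to L^\infty}\lesssim \lambda^2(\lambda|t|)^{-1}$ is \emph{not} known on domains with boundary, and there is good reason to believe it fails as stated near gliding rays (concave boundary, whispering-gallery modes). Smith--Sogge do not prove such a dispersive estimate; their argument in~\cite{SmSo06} instead extends the problem across the boundary to a Lipschitz metric on a boundaryless manifold and proves spectral cluster bounds directly for low-regularity metrics via wave-packet/squarefunction techniques, bypassing any pointwise $|t|^{-1}$ decay. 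Your sketch ultimately defers the hard glancing case to ``follow the Smith--Sogge squarefunction/wave-packet approach,'' which is circular, since that \emph{is} the content of the theorem you are trying to establish. If you want a self-contained argument, you would need to reproduce a substantial portion of~\cite{SmSo06}; if not, you should simply cite it, as the present paper does.
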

From which we derive Strichartz estimates, which are
optimal w.r.t scaling.
\begin{thm}\label{th.2}
Assume that for some $2 \leq q < + \infty$, the spectral projector $\Pi_\lambda$ satisfies
\begin{equation}\label{eq.estproj}
 \|\Pi_\lambda u\|_{L^q(\Omega)}\lesssim  \lambda^{\delta} \|u\|_{L^(
 \Omega)}.
\end{equation}
Then the solution to the wave equation $v(t,x) = e^{it \sqrt{-
\Delta_N}}u_0$ satisfies
$$
 \|v\|_{L^q((0,2\pi)_t \times \Omega_x)}\lesssim  \|u_0 \|_{H_N^{\delta+\frac 1 2 - \frac 1 q}}.
$$
\end{thm}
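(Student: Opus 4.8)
The plan is to derive the estimate from the single--band bound \eqref{eq.estproj} by a two--scale decomposition of the frequency variable. Write $u_0=\Pi_{<2}u_0+\sum_N P_Nu_0$, where $N$ runs over the dyadic values $2^m$, $m\ge1$, and $P_N=\sum_{N\le k<2N}\Pi_k$; set $v_N=e^{it\sqrt{-\Delta_N}}P_Nu_0$ and $v_k=e^{it\sqrt{-\Delta_N}}\Pi_ku_0$, so that on $(0,2\pi)\times\Omega$ one has $v=v_{<2}+\sum_N v_N$. The two summations involved --- over the $\mathcal{O}(N)$ unit bands inside a dyadic block, and over the dyadic blocks themselves --- are of quite different nature: the first costs a factor $N^{\frac12-\frac1q}$, which is exactly the loss of regularity allowed by the exponent $\delta+\frac12-\frac1q$ in the statement, while the second is lossless, since the $v_N$ have time--frequencies in essentially disjoint dyadic annuli so that Littlewood--Paley theory in the $t$ variable applies.

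I would carry this out in three steps, writing $\|\cdot\|_{L^q}$ for $\|\cdot\|_{L^q((0,2\pi)\times\Omega)}$. First, the single--band estimate: since $\Pi_k$ commutes with $e^{it\sqrt{-\Delta_N}}$, which is unitary on $L^2$, the hypothesis \eqref{eq.estproj} gives $\|v_k(t)\|_{L^q(\Omega)}\lesssim\langle k\rangle^{\delta}\|\Pi_ku_0\|_{L^2(\Omega)}$ for every $t$, hence $\|v_k\|_{L^q}\lesssim\langle k\rangle^{\delta}\|\Pi_ku_0\|_{L^2(\Omega)}$. Second, summation inside a block: the $v_k$ with $N\le k<2N$ have time--Fourier support in the disjoint unit intervals $[k,k+1)$, so (after a harmless smoothing of the frequency truncations) the reconstruction map $(w_k)\mapsto\sum_k w_k$ is bounded $\ell^2_k(L^2_t)\to L^2_t$ by Plancherel and $\ell^1_k(L^\infty_t)\to L^\infty_t$ trivially; interpolation yields an $\ell^{q'}_k(L^q_t)\to L^q_t$ bound, and combining it with Minkowski's inequality in $x$ (here $q\ge2$) and then H\"older over the $\mathcal{O}(N)$ indices $k$ gives
$$\|v_N\|_{L^q}\ \lesssim\ N^{\frac12-\frac1q}\Big(\sum_{N\le k<2N}\|v_k\|_{L^q}^2\Big)^{1/2}\ \lesssim\ N^{\,\delta+\frac12-\frac1q}\,\|P_Nu_0\|_{L^2(\Omega)},$$
the last inequality by the first step. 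Third, summation over blocks: the $v_N$ have time--spectra in $[N,2N)$, so the usual square--function reconstruction in $t$ (valid for $2\le q<\infty$) gives $\|\sum_N v_N\|_{L^q}\lesssim(\sum_N\|v_N\|_{L^q}^2)^{1/2}$. Inserting the second step and using that $\sum_N N^{2(\delta+1/2-1/q)}\|P_Nu_0\|_{L^2(\Omega)}^2\approx\|u_0\|_{H_N^{\delta+1/2-1/q}}^2$ by the spectral theorem --- the term $v_{<2}$ being finite--dimensional and estimated trivially by $\|u_0\|_{L^2(\Omega)}$ --- closes the estimate.

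Two points need care. Since $v_N$ does not decay in $t$, before invoking Fourier analysis on $\mathbb{R}_t$ one first multiplies by a fixed band--limited weight $\psi$ with $\psi\gtrsim1$ on $(0,2\pi)$; convolution with $\widehat\psi$ only spreads each time--spectrum by $O(1)$ and preserves the bounded--overlap dyadic structure, so this is immaterial. More importantly, the bookkeeping in the second step is the delicate part: carrying out the band summation and the time integration separately --- a triangle inequality over the $\mathcal{O}(N)$ bands in $L^q$, followed by an independent Bernstein estimate in $t$ --- would cost $N^{1/2}\cdot N^{1/2-1/q}$ and the estimate would be off by $N^{1/q}$. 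It is essential that the vector--valued interpolation performs the two at once and produces only the factor $N^{1/2-1/q}$. Everything else, in particular the way the geometry of $\Omega$ enters, is contained in \eqref{eq.estproj}; the remaining ingredients are one--dimensional harmonic analysis.
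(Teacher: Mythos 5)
Your proposal is correct and follows essentially the same route as the argument the paper invokes from \cite{BLP1}: dyadic decomposition in frequency, exploitation of the $O(1)$ time--frequency localization of each $e^{it\sqrt{-\Delta_N}}\Pi_k u_0$, an $\ell^{q'}(L^q)\to L^q$ reconstruction bound obtained by interpolating $\ell^2(L^2)\to L^2$ (almost orthogonality) with $\ell^1(L^\infty)\to L^\infty$, H\"older over the $O(N)$ unit bands to produce the $N^{1/2-1/q}$ loss, and a Littlewood--Paley square function in $t$ to sum dyadic blocks losslessly. You have also correctly flagged the two delicate points --- the need for a band--limited time cutoff before invoking Fourier analysis on $\mathbb{R}_t$, and the fact that the time integration and the band summation must be carried out simultaneously via vector--valued interpolation rather than sequentially --- both of which are indeed where a careless version of this argument would lose a power of $N$.
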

The proof given in \cite {BLP1} applies verbatim and we therefore skip
it.

  \begin{prop}
\label{propex}
    If $u,f_1,f_2$ satisfy
$$
 (\partial_t^2- \Delta)u =f_1+f_2, \quad \partial_n u_{\mid \partial \Omega} =0, \quad u _{\mid t=0}= u_0, \quad \partial_t u _{\mid t=0} = u_1
$$
then
\begin{multline}\label{eq.besov}
 \|u\|_{L^5((0,1);W^{\frac 3 {10},5}_N ( \Omega))}+
 \|u\|_{C^0((0,1); H^1_N( \Omega))}+\|\partial_t
 u\|_{C^0((0,1); L^2( \Omega))}\\
  \leq C
 \left(\|u_0\|_{H^1_N( \Omega)} + \|u_1\|_{L^2( \Omega)}+\|f_1\|_{L^1{((0,1);L^2(\Omega))}}+
 \|f_2\|_{L^{\frac 5 4}((0,1); W^{\frac 7 {10},\frac 5 4} ( \Omega))}\right)\,.
\end{multline}
Furthermore, \eqref{eq.besov} holds (with the same constant $C$) if one replaces the time interval $(0,1)$ by any interval of length smaller than $1$.
  \end{prop}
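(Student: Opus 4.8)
The plan is to reduce the estimate \eqref{eq.besov} to a superposition of linear bounds via Duhamel's formula, and to derive each of them from Theorem~\ref{th.2}, the energy identity, and a $TT^{*}$ duality argument together with the Christ--Kiselev lemma. Write $u=v_{0}+v_{1}+v_{2}$, where $v_{0}$ solves the homogeneous equation with data $(u_{0},u_{1})$ and the Neumann condition, and $v_{j}(t)=\int_{0}^{t}\frac{\sin((t-s)\sqrt{-\Delta_{N}})}{\sqrt{-\Delta_{N}}}f_{j}(s)\,ds$ for $j=1,2$. All three are expressed through $e^{\pm it\sqrt{-\Delta_{N}}}$; on the kernel of $\Delta_{N}$ (the constants) the equation reduces to the elementary ODE $\partial_{t}^{2}\bar v=\bar f$ for mean values, whose contribution to each norm in \eqref{eq.besov} is harmless on a bounded domain, so from now on I argue modulo this zero mode and use freely that $(-\Delta_{N})^{-1/2}$ is bounded from (mean--zero) $L^{2}$ to $H^{1}_{N}$ and that $(1-\Delta_{N})^{1/2}(-\Delta_{N})^{-1/2}$ is bounded on (mean--zero) $L^{p}$, $1<p<\infty$.

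First I record the $W^{\sigma,5}_{N}$ form of Theorem~\ref{th.2}: since $(1-\Delta_{N})^{\sigma/2}$ commutes with $e^{it\sqrt{-\Delta_{N}}}$, replacing $u_{0}$ by $(1-\Delta_{N})^{\sigma/2}u_{0}$ and using the spectral projector bound \eqref{eq.proj} (so $q=5$, $\delta=\tfrac{2}{5}$ and $\delta+\tfrac{1}{2}-\tfrac{1}{q}=\tfrac{7}{10}$) gives $$\|e^{it\sqrt{-\Delta_{N}}}g\|_{L^{5}((0,2\pi);\,W^{\sigma,5}_{N}(\Omega))}\;\lesssim\;\|g\|_{H^{\sigma+7/10}_{N}(\Omega)},\qquad\sigma\in\mathbb{R},$$ and, by time translation invariance, this also holds on any interval of length $\le 2\pi$. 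For $v_{0}$ the $C^{0}H^{1}_{N}\cap C^{1}L^{2}$ norm is controlled by the energy identity, and the $L^{5}W^{3/10,5}_{N}$ norm by the displayed estimate with $\sigma=\tfrac{3}{10}$ (for the $u_{1}$--part the extra factor $(-\Delta_{N})^{-1/2}$ trades $L^{2}$ for $H^{1}_{N}$). For $v_{1}$ the energy identity bounds $\|v_{1}\|_{C^{0}H^{1}_{N}}+\|\partial_{t}v_{1}\|_{C^{0}L^{2}}$ by $\|f_{1}\|_{L^{1}L^{2}}$, while for the $L^{5}W^{3/10,5}_{N}$ norm Minkowski's integral inequality reduces matters to the displayed estimate with $\sigma=\tfrac{3}{10}$ applied, for each fixed $s$, to $(-\Delta_{N})^{-1/2}f_{1}(s)\in H^{1}_{N}$, after which $\int_{0}^{1}\|f_{1}(s)\|_{L^{2}}\,ds$ closes the bound.

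The main point is $v_{2}$. Since $\tfrac{7}{10}=1-\tfrac{3}{10}$, the source space $L^{5/4}W^{7/10,5/4}$ is, after a gain of one derivative, exactly dual in space--time to the Strichartz space $L^{5}W^{3/10,5}_{N}$, and the estimate comes from $TT^{*}$. For the $L^{5}W^{3/10,5}_{N}$ norm I pair $v_{2}$ with $\phi\in L^{5/4}((0,1);W^{-3/10,5/4}(\Omega))$; using the self--adjointness of $\frac{\sin(r\sqrt{-\Delta_{N}})}{\sqrt{-\Delta_{N}}}$ and the Christ--Kiselev lemma (applicable since $\tfrac{5}{4}<5$) to drop the restriction $s<t$, one is left with a quantity of the shape $\int_{0}^{1}\langle f_{2}(s),e^{\pm is\sqrt{-\Delta_{N}}}G\rangle\,ds$ with $G=\int_{0}^{1}e^{\mp it\sqrt{-\Delta_{N}}}(-\Delta_{N})^{-1/2}\phi(t)\,dt$. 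Hölder in time and the $W^{7/10,5/4}$--$W^{-7/10,5}$ space duality bound this by $\|f_{2}\|_{L^{5/4}W^{7/10,5/4}}\,\|e^{\pm is\sqrt{-\Delta_{N}}}G\|_{L^{5}W^{-7/10,5}_{N}}$; the displayed estimate with $\sigma=-\tfrac{7}{10}$ gives $\|e^{\pm is\sqrt{-\Delta_{N}}}G\|_{L^{5}W^{-7/10,5}_{N}}\lesssim\|G\|_{L^{2}}$, and the $L^{2}$ dual of that same estimate gives $\|G\|_{L^{2}}\lesssim\|(-\Delta_{N})^{-1/2}\phi\|_{L^{5/4}W^{7/10,5/4}}\lesssim\|\phi\|_{L^{5/4}W^{-3/10,5/4}}$, proving $\|v_{2}\|_{L^{5}W^{3/10,5}_{N}}\lesssim\|f_{2}\|_{L^{5/4}W^{7/10,5/4}}$. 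The energy--type norms of $v_{2}$ follow from the same duality at fixed $t$: $\|(-\Delta_{N})^{1/2}v_{2}(t)\|_{L^{2}}=\sup_{\|h\|_{L^{2}}=1}\bigl|\int_{0}^{t}\langle f_{2}(s),\sin((t-s)\sqrt{-\Delta_{N}})h\rangle\,ds\bigr|\lesssim\|f_{2}\|_{L^{5/4}((0,1);W^{7/10,5/4})}$ uniformly in $t\in(0,1)$, and likewise for $\partial_{t}v_{2}$; continuity of $v_{2}$ in $H^{1}_{N}$ and of $\partial_{t}v_{2}$ in $L^{2}$ then follows by a density argument.

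Adding the three contributions gives \eqref{eq.besov}. The last assertion is immediate: the equation being autonomous, translation reduces any time interval of length $\ell\le 1$ to $(0,\ell)\subset(0,1)$, and every norm in \eqref{eq.besov} only decreases when the time interval shrinks (the data being read at the left endpoint), so the constant for $(0,1)$ is admissible. I expect the $v_{2}$ estimate to be the only genuinely delicate step: one must organise the $TT^{*}$ argument so that precisely one derivative is gained and split as $\tfrac{3}{10}$ on the solution side and $\tfrac{7}{10}$ on the source side, all the while keeping track of the Christ--Kiselev time orderings and of the ubiquitous (but harmless) zero eigenmode of $\Delta_{N}$.
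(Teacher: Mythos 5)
Your proposal follows essentially the same route as the paper: decompose $u$ into the homogeneous evolution plus two Duhamel terms, treat the $f_1$ piece by energy plus Minkowski, and treat the $f_2$ piece by a $TT^{*}$ duality together with the Christ--Kiselev lemma. The one step that is materially weaker than the paper's argument is the passage from the $L^5$ squarefunction bound of Theorem~\ref{th.2} to the family of estimates
\[
\|e^{it\sqrt{-\Delta_N}}g\|_{L^5((0,1);W^{\sigma,5}_N(\Omega))}\lesssim\|g\|_{H^{\sigma+7/10}_N(\Omega)},\qquad \sigma\in\mathbb{R},
\]
which you obtain by ``commuting $(1-\Delta_N)^{\sigma/2}$ with the propagator.'' That commutation is of course exact, but it only yields an estimate on $\|(1-\Delta_N)^{\sigma/2}e^{it\sqrt{-\Delta_N}}g\|_{L^5L^5}$; to convert this into the $W^{\sigma,5}_N$ norm one has to know that, on a bounded domain with Neumann boundary conditions, the abstract scale $(1-\Delta_N)^{-\sigma/2}L^5$ coincides (with equivalent norms) with the intrinsic fractional Sobolev space used elsewhere, in particular so that the Sobolev embedding $W^{3/10,5}\hookrightarrow L^{10}$ in \eqref{eq.faible} and the identification $W^{7/10,5/4}(\Omega)=W^{7/10,5/4}_N(\Omega)$ in the duality step are legitimate. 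That identification is not automatic for fractional $\sigma$ and $p\neq 2$ in the presence of a boundary, and it is exactly what the paper establishes: it proves the endpoint $\sigma=2$ via the $L^p$ elliptic regularity estimate for the Neumann problem ($-\Delta u+u=f\in L^p$, $\partial_n u|_{\partial\Omega}=0\Rightarrow u\in W^{2,p}\cap W^{1,p}_N$ with $\|u\|_{W^{2,p}}\lesssim\|f\|_{L^p}$) and then complex-interpolates between $\sigma=0$ and $\sigma=2$ to reach $\sigma=\tfrac{3}{10}$, and analogously it reaches $\sigma=-\tfrac{7}{10}$ by inverting $\Delta_N$ on $L^2$ into $H^2_N$ rather than by formally setting $\sigma$ negative. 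You should insert this elliptic-regularity-plus-interpolation step explicitly; once that is done, the rest of your duality and Christ--Kiselev argument (and your careful separate treatment of the zero mode, and of $C^0$ continuity, which the paper leaves implicit) is fine and matches the paper's proof.
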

\begin{rem} Notice that by Sobolev embedding and trace lemma, one has
\begin{equation}\label{eq.faible}
 \|u\|_{L^5((0,1);L^{10}( \Omega))}+ \|u_{\mid \partial\Omega}\|_{L^5((0,1); L^{\frac {20}
  3}( \partial \Omega))} \lesssim  \|u\|_{L^5((0,1);W^{\frac 3 {10},5}_N ( \Omega))},
 \end{equation}
which will play a crucial role in the non-concentration argument.
\end{rem}
\begin{cor}\label{cor.1} 
For any initial data $(u_0, u_1) \in H^1_N(\Omega) \times L^2(\Omega)$, the critical non linear wave equation~\eqref{eq.NLW} is locally well posed in 
$$
 X_T= C^0([0,T]; H^1_N( \Omega)) \cap L^5_{\mathrm{loc}}((0,T); L^{10}( \Omega)) \times
 C^0([0,T]; L^2( \Omega))$$ (globally for small norm initial data).
\end{cor}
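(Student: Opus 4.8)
The plan is to produce the solution as a fixed point of the Duhamel map attached to~\eqref{eq.NLW}. Let $u^{\mathrm{free}}$ be the solution of the free Neumann wave equation with Cauchy data $(u_0,u_1)$, and for a source $f$ let $Lf$ be the solution of $(\partial_t^2-\Delta)(Lf)=f$ with vanishing Cauchy data and $\partial_n(Lf)_{\mid\partial\Omega}=0$; we look for $u$ with $u=\Phi(u):=u^{\mathrm{free}}-L(u^5)$. Set $Z_T:=L^5((0,T);W^{\frac3{10},5}_N(\Omega))$. Proposition~\ref{propex}, applied with $f_1=-u^5$ and $f_2=0$, bounds $\|\Phi(u)\|_{Z_T}+\|\Phi(u)\|_{C^0([0,T];H^1_N)}+\|\partial_t\Phi(u)\|_{C^0([0,T];L^2)}$ by a fixed constant times $\|u_0\|_{H^1_N}+\|u_1\|_{L^2}+\|u^5\|_{L^1((0,T);L^2)}$. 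Since $\|u\|_{L^5((0,T);L^{10}(\Omega))}\lesssim\|u\|_{Z_T}$ by~\eqref{eq.faible} (restricted to the subinterval), we have $\|u^5\|_{L^1((0,T);L^2)}=\|u\|_{L^5((0,T);L^{10})}^5\lesssim\|u\|_{Z_T}^5$; likewise, from the pointwise bound $|u^5-v^5|\lesssim(|u|^4+|v|^4)|u-v|$ and Hölder's inequality (with $\tfrac12=\tfrac4{10}+\tfrac1{10}$ in $x$ and $1=\tfrac45+\tfrac15$ in $t$), $\|u^5-v^5\|_{L^1((0,T);L^2)}\lesssim(\|u\|_{Z_T}^4+\|v\|_{Z_T}^4)\|u-v\|_{Z_T}$, hence $\|\Phi(u)-\Phi(v)\|_{Z_T}\lesssim(\|u\|_{Z_T}^4+\|v\|_{Z_T}^4)\|u-v\|_{Z_T}$.

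Now put $a_T:=\|u^{\mathrm{free}}\|_{Z_T}$. Proposition~\ref{propex} with $f_1=f_2=0$ gives $a_1\lesssim\|u_0\|_{H^1_N}+\|u_1\|_{L^2}$, so in particular $u^{\mathrm{free}}\in Z_1$, and therefore $a_T\to0$ as $T\to0^+$ by dominated convergence. On the complete metric space $B:=\{u\in Z_T:\|u\|_{Z_T}\le 2a_T\}$, the two estimates above show that, as soon as $a_T$ is small enough, $\Phi$ maps $B$ into itself and is a contraction there. This smallness is achieved by taking $T$ small for arbitrary data $(u_0,u_1)$, and already with $T=1$ when $\|u_0\|_{H^1_N}+\|u_1\|_{L^2}$ is small. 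The resulting fixed point $u$ solves~\eqref{eq.NLW} and lies in $Z_T\subset L^5((0,T);L^{10}(\Omega))$; by Proposition~\ref{propex} it also belongs to $C^0([0,T];H^1_N)\cap C^1([0,T];L^2)$, so $u\in X_T$.

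For uniqueness, two solutions $u,v\in X_T$ with the same data satisfy, on any short interval $[0,\tau]$, $\|u-v\|_{Z_\tau}\le C(\|u\|_{Z_\tau}^4+\|v\|_{Z_\tau}^4)\|u-v\|_{Z_\tau}$; since $\|u\|_{Z_\tau},\|v\|_{Z_\tau}\to0$ as $\tau\to0$, we obtain $u=v$ near $0$, and a routine continuation argument extends this to all of $[0,T]$. The global statement for small data then follows by iterating the local result on consecutive intervals, using conservation of energy to keep $\|\nabla u(t)\|_{L^2}+\|\partial_t u(t)\|_{L^2}$ under control, exactly as in the Dirichlet case of~\cite{BLP1} (and it is in any event subsumed by Theorem~\ref{th.1}). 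I do not expect a genuine obstacle here: the whole argument reduces to Proposition~\ref{propex} and the embedding~\eqref{eq.faible}. The one point requiring care is that~\eqref{eq.NLW} is energy/scaling critical, so the contraction factor cannot be made small merely by shrinking $T$ in the critical norm itself; the needed smallness must instead be supplied either by the tail $a_T=\|u^{\mathrm{free}}\|_{Z_T}\to0$ (arbitrary data, short time) or by the smallness of the data.
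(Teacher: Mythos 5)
Your argument is the standard fixed point scheme the paper alludes to (but does not spell out): pass to the Duhamel map, bound the nonlinear term via Proposition~\ref{propex} with $f_1=-u^5$ together with the embedding~\eqref{eq.faible}, and obtain the contraction smallness not from $T$ alone (which is impossible in the critical setting) but from the vanishing tail $\|u^{\mathrm{free}}\|_{Z_T}\to 0$ as $T\to 0$, or from smallness of the data when $T=1$. This matches the paper's intended proof and is correct in substance. One small slip in the uniqueness paragraph: a solution in $X_T$ is only known to lie in $L^5((0,\tau);L^{10}(\Omega))$, not in $Z_\tau=L^5((0,\tau);W^{3/10,5}_N(\Omega))$, so the quantities that tend to zero as $\tau\to0^+$ and close the absorption should be $\|u\|_{L^5((0,\tau);L^{10})}$ and $\|v\|_{L^5((0,\tau);L^{10})}$ rather than $\|u\|_{Z_\tau}$, $\|v\|_{Z_\tau}$; their \emph{difference} does land in $Z_\tau$ by Proposition~\ref{propex}, and with that replacement the estimate reads $\|u-v\|_{Z_\tau}\le C\bigl(\|u\|_{L^5L^{10}}^4+\|v\|_{L^5L^{10}}^4\bigr)\|u-v\|_{Z_\tau}$ and the conclusion follows as you state.
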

We refer to Appendix A for a proof of Proposition \ref{propex}. The
 proof of Corollary~\ref{cor.1} proceeds by a standard fixed point argument with $(u, \partial_t u)$ in the space $X_T$
 with a sufficiently small $T$ (depending
 on the initial data $(u_0, u_1)$). Note that this local in time
 result holds irrespective of the sign of the nonlinearity.

Finally, to obtain the global well posedness result for small initial data, it is enough to remark that if the norm of the initial data is small enough, then the fixed point can be performed in $X_{T=1}$.
Then the control of the $H^1$ norm by the energy (which is conserved along the evolution) allows to iterate this argument indefinitely leading to global existence. Note that this
 result holds also irrespective of the sign of the nonlinearity because for small $H^1$ norms, the energy always control the $H^1$ norm. 
\section{Global existence}
It turns out that our Strichartz estimates are strong enough to extend
local to {\em global} existence for arbitrary (finite energy) data,
when combined with trace estimates
and non concentration arguments.

Before going into details, let us sketch the proof.

Remark that if $f= u^5$, we can estimate
\begin{equation}
\begin{aligned}
\|u^5\|_{L^{\frac 5 4}((0,1);L^{\frac {30} {17}}( \Omega))}&\leq  \|u\|^4_{L^5((0,1); L^{10}( \Omega))} \|u \|_{L^\infty((0,1); L^6( \Omega))}\,,\\
\|\nabla_x (u^5)\|_{L^{\frac 5 4}((0,1);L^{\frac {10} {9}}( \Omega))}&=5\|u^4\nabla_x u\|_{L^{\frac 5 4}((0,1);L^{\frac {10} {9}}( \Omega))}\\
&\leq 5 \|u\|^4_{L^5((0,1); L^{10}( \Omega))} \|u \|_{L^\infty((0,1); H^1( \Omega))}\,.
\end{aligned}
\end{equation}
Interpolating between these two inequalities yields
\begin{equation}\label{eq.bonstrichartz} 
\|u^5\|_{L^{ \frac 5 4}((0,1); W^{\frac 7 {10},\frac 5 4}(
  \Omega))}\lesssim  \|u\|^4_{L^5((0,1); L^{10}( \Omega))} \|u \|^{\frac 3
  {10}}_{L^\infty((0,1); L^6( \Omega))}\|u \|^{\frac 7
  {10}}_{L^\infty((0,1); H^1( \Omega))}\,.
\end{equation}
Following ideas of Struwe~\cite{St88}, Grillakis~\cite{Gr90} and
Shatah-Struwe~\cite{ShSt93, ShSt94}, we will localize these estimates
on small light cones and use the fact that the $L^\infty_t;L^6_x$ norm
is small in such small cones. Unlike with Dirichlet boundary
conditions, we cannot hope to have a good control of the $H^1$ norm of the trace of
$u$ on the boundary (this control is known to be false even for the linear problem), and it will require a more delicate argument to
handle boundary terms.
\subsection{The $L^6$ estimate}
In this section we shall always consider  solutions to~\eqref{eq.NLW} in 
\begin{equation}\label{eq.espace}
 X_{<t_0}= C^0([0,t_0); H^1_N( \Omega)) \cap L^5_{loc}([0,t_0); L^{10}( \Omega)) \times
 C^0([0,t_0); L^2( \Omega)).
\end{equation}
Moreover, we assume these solutions to have energy bounded by a fixed
constant (namely, $E_0$). By a standard procedure, such solutions are obtained as limits in $X_{<t_0}$ of smooth solutions to the analog
of~\eqref{eq.NLW} where the nonlinearity and the initial data have been smoothed
out. Consequently all the subsequent integrations by parts will be
licit by a limiting argument. 

\subsubsection{The flux identity} By time translation, we shall assume later that $t_0=0$.
 Let us first define
$$
 Q = \frac{|\partial_tu|^2 + |\nabla_xu|^2 } 2 + \frac {|u|^6} 6 + \partial _t u (\frac x t \cdot \nabla_x) u,
$$
\begin{equation}\label{eq.P} 
P = \frac x t \Bigl( \frac{|\partial_tu|^2 - |\nabla_xu|^2 } 2 - \frac {|u|^6} 6\Bigr)
+ \nabla_x u  \Bigl( \partial_t u + (\frac x t \cdot \nabla_x) u + \frac u t \Bigr)\,,
\end{equation}
\begin{gather*}
  \Omega_S^T  =  [S,T]\times \Omega, \qquad \partial \Omega_S^T  =  [S,T]\times \partial \Omega\\
   D_T  = \{x; |x| <-T\},\qquad M_S^T  =\{x;|x|=-t\}\cap \Omega_S^T \\
 K_S^T  = \{(x,t); |x|<-t\}\cap \Omega_S^T\\
 \partial K_S^T  =  (\partial \Omega_S^T\cap K_S^T) \cup D_T \cup D_S \cup M_S^T\\
e(u)  = \Bigl(\frac{ |\partial_t u|^2 + |\nabla_x u|^2 } 2 + \frac  {|u|^6} 6, - \partial_t u \nabla _x u\Bigr) 
\end{gather*}
and the flux across $M_S^T$
$$
\text{ Flux }(u, M_S^T)= \int_{M_S^T} \langle e(u), \nu\rangle   \,d\rho(x,t),
$$
where 
\begin{equation}
  \label{eq:normalcone}
\nu=\frac 1{ \sqrt 2 |x|} (-t,-\frac{x} t)=\frac 1{ \sqrt 2 |x|} (|x|,\frac{x} {|x|}) 
\end{equation}
 is the outward normal to $ M_S^T$ and $d\rho(x,t)$ the induced measure on $M_S^T$. Remark that 
\begin{align}\label{eq.fluxest}
\text{ Flux }(u, M_S^T) & =  \int_{M_S^T}\frac{|\partial_t u|^2 + |\nabla_x u|^2} 2 + \frac {|u|^6} 6 - \partial_t u \frac {x} { |x|} \cdot \nabla_x u \,d \rho(x,t)\\
 & = \int_{M_S^T}\frac1 2 |\frac x {|x|} \partial_t u - \nabla_x u|^2
 + \frac {|u|^6} 6 \,d\rho(x,t)\geq 0 \,.\nonumber
\end{align}
One may notice that the boundary of $K_S^T$ is $M_S^T\cup
\partial\Omega_S^T\cup D_S\cup D_T$, and that, due to the boundary
condition, there is no flux through $\partial\Omega_S^T$.

An integration by parts gives (see Rauch~\cite{Ra81} or~\cite[(3.3')]{SmSo95})
\begin{multline}\label{eq.locener}
\int_{x\in \Omega, |x|<-T}\Bigl(\frac{ |\partial_t u|^2 + |\nabla_x u|^2 } 2 + \frac  {|u|^6} 6\Bigr)(x,T) dx + \text{ Flux } (u, M_S^T) \\
=\int_{x\in \Omega, |x|<-S}\Bigl(\frac{ |\partial_t u|^2 + |\nabla_x u|^2 } 2 + \frac  {|u|^6} 6\Bigr)(x,S) dx= E_{loc}(S).
\end{multline}
Therefore $u_{\mid M_S^T}$ is bounded in $H^1(M_S^T)\cap
L^6(M_S^T)$ (uniformly with respect to $T<0$), and $E_{loc}(S)$
being a non-negative non-increasing function, it has a limit when
$S\rightarrow 0^-$, and
\begin{equation}\label{eq.fluxestbis}
\text{Flux}(u, M_S^0)=\lim_{T\rightarrow 0^-}\text{Flux}(u, M_S^T)
=\lim_{T\rightarrow 0^-}( E_{loc} (S) - E_{loc}(T))
\end{equation} exists and satisfies      
\begin{equation}\label{eq.flux}
\lim_{S\rightarrow 0^-}\text{ Flux } (u, M_S^0)=0.
\end{equation}
\subsubsection{A priori estimate for traces}
We prove an priori estimate on finite energy solutions
of~\eqref{eq.NLW}, which is reminiscent of results obtained in
\cite{Tataru-trace} for variable coefficients linear wave
equations. Observe that, unlike in the Dirichlet case, we don't have
any uniform Lopatinskii condition, which prevents control of the
gradient on the boundary.
The following result will provide a substitute. It shows that even though (due to
the failure of the uniform Lopatinskii condition), the $H^1$ norm of
the trace is known to be unbounded (at least for the linear equation),
the integral on the boundary of a specific quadratic form (namely the
so-called $Q_0$ null form) remain bounded (see~\cite{Tataru-trace} for related
results). 
\begin{prop}\label{prop.apriori2} Let $n(x)$ be the exterior unit normal vector to
$\partial\Omega$, $-\varepsilon<S<T\leq 0$, $d\sigma$ the induced
measure on $\partial\Omega$, and $u$ a solution to \eqref{eq.NLW}. Then
\begin{equation} \label{eq.apriori2}
\left |\int_{K^T_S\cap  \partial\Omega_S^T} (|\partial_t u|^2-|\nabla
u|^2-\frac{u^6} 3)\,n(x)\cdot x \,d\sigma dt\right| \lesssim  |S|^2 (E_0+E_0^{\frac 2 3}),
\end{equation}
where the tip of the cone $K^0_{S}$ is located on $\partial\Omega$ and
has been set to be $0\in \partial \Omega$.
\end{prop}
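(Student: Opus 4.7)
My plan is to derive a vector-field multiplier identity for the equation $(\partial_t^2-\Delta)u+u^5=0$ adapted to the Neumann boundary condition, and apply it over $K_S^T$ via the divergence theorem. The natural multiplier is
\[
\mathcal{X}u:=x\cdot\nabla u+u.
\]
Multiplying $\Box u+u^5$ by $\mathcal{X}u$ and performing the standard integrations by parts yields a spacetime identity $\partial_t G^0+\nabla_x\cdot\mathbf{G}=\mathcal{B}$, with $G^0=\partial_t u\,\mathcal{X}u$, $\mathcal{B}=\tfrac{1}{2}(|\partial_t u|^2+|\nabla u|^2+u^6)$, and an explicit spatial current $\mathbf{G}$. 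The crucial cancellation is that on $\partial\Omega$ the Neumann condition $\partial_n u=0$ kills every term in $\mathbf{G}\cdot n$ containing a normal derivative (both the piece $(x\cdot\nabla u)\partial_n u$ from the $x\cdot\nabla u$ multiplier and the piece $u\partial_n u$ from the $+u$ piece), leaving exactly
\[
\mathbf{G}\cdot n\big|_{\partial\Omega}=-\tfrac{1}{2}\bigl(|\partial_t u|^2-|\nabla u|^2-\tfrac{u^6}{3}\bigr)(n\cdot x),
\]
i.e., $-1/2$ times the integrand of \eqref{eq.apriori2}.

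Applying the divergence theorem to $K_S^T$, whose boundary decomposes as $D_S\cup D_T\cup M_S^T\cup(\partial\Omega\cap K_S^T)$, one expresses the target integral as the algebraic sum of (i) the bulk $-\int_{K_S^T}\mathcal{B}\,dxdt$, (ii) the disk contributions $\int_{D_T}G^0-\int_{D_S}G^0$, and (iii) the cone contribution on $M_S^T$. Each of these is controlled by: conservation of energy ($\int e(u)\,dx\le E_0$); the flux identity~\eqref{eq.fluxest}, which bounds the cone integral by $\text{Flux}(u,M_S^T)\le E_0$; the Sobolev embedding $H^1(\Omega)\hookrightarrow L^6(\Omega)$, producing the $E_0^{2/3}$ contribution via $\|u\|_{L^2(D_T)}\le|D_T|^{1/3}\|u\|_{L^6}\lesssim|T|E_0^{1/6}$ applied to the $u\partial_t u$ piece of $G^0$; and the elementary bounds $|x|\le|S|$ and $|T-S|\le|S|$ valid on $K_S^T$.

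\textbf{Main obstacle.} The principal difficulty is producing the full $|S|^2$ scaling on the right-hand side of \eqref{eq.apriori2} rather than the bare $|S|$ that a naive first-pass estimate yields. The extra factor of $|S|$ is provided by the geometric fact that $\partial\Omega$ is smooth and passes through the tip $0\in\partial\Omega$, so that $n(x)\cdot x = O(|x|^2)$ for $x\in\partial\Omega$ close to $0$: the tangent plane of $\partial\Omega$ at $0$ makes the weight $n\cdot x$ vanish to second order there. This pointwise geometric smallness must be woven in consistently with the multiplier identity above, either as a pointwise bound $|n\cdot x|\le C|S|^2$ absorbed into each sub-estimate on $\partial\Omega\cap K_S^T$, or by rewriting the currents so that the extra smallness appears intrinsically. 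The detour through the multiplier identity is forced by the failure of the uniform Lopatinskii condition for Neumann boundary conditions: unlike the Dirichlet case, $\int_{\partial\Omega}|\nabla u|^2\,d\sigma\,dt$ is \emph{not} directly bounded by the energy, and only the specific $Q_0$-null-form combination $|\partial_t u|^2-|\nabla u|^2$ paired with the geometrically small weight $n\cdot x$ turns out to be amenable to control in terms of interior energy quantities.
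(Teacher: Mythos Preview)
Your identity is correct: multiplying $\Box u+u^5=0$ by $x\cdot\nabla u+u$ and integrating over $K_S^T$ does produce exactly $-\tfrac12$ times the target boundary integral on $\partial\Omega\cap K_S^T$, with bulk $\mathcal B=\tfrac12(|\partial_t u|^2+|\nabla u|^2+u^6)$ and the remaining disk and cone pieces as you describe. The gap is precisely the one you flag as the ``main obstacle'', and neither of your proposed fixes closes it. The bulk term $\int_{K_S^T}\mathcal B\,dx\,dt$ is genuinely of size $|S|E_0$, with no hidden cancellation against the disk or cone contributions; likewise the disk and cone pieces are each $O(|S|(E_0+E_0^{2/3}))$. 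None of these carries a factor $n(x)\cdot x$, so the pointwise bound $|n\cdot x|\le C|S|^2$ is simply unavailable on those pieces. Pulling $|n\cdot x|\le C|S|^2$ out of the $\partial\Omega$ integral instead leaves you needing $\int_{\partial\Omega\cap K_S^T}(|\partial_t u|^2+|\nabla u|^2)\,d\sigma\,dt\lesssim E_0$, which is exactly the Lopatinskii-type trace bound that fails for Neumann data. So the identity as written cannot deliver the $|S|^2$.

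The paper's remedy is not a rewriting of your currents but a genuinely different construction. One introduces a smooth extension $w(x)$ of $n(x)\cdot x$ into the interior with $w=O(|x|^2)$ everywhere, together with a vector field $Z$ chosen so that (i) $Z$ is tangent to each sphere $\{|x|=-t\}$ (hence to the cone $M_S^T$), and (ii) $Z|_{\partial\Omega}=\partial_n+\tau$ with $\tau$ tangent to $\partial\Omega$. One then studies $\int_{K_S^T} w\,u\,[\Box,Z]u\,dx\,dt$ and applies Green's identity. The point is that the weight $w=O(|S|^2)$ now sits on \emph{every} interior, disk, and cone term, supplying the missing factor, while the tangency of $Z$ to $M_S^T$ keeps the cone terms controllable by the flux, and the normal component of $Z$ on $\partial\Omega$ is what manufactures (after using $\partial_n u=0$ and $\partial_n^2 u=(\partial_t^2-\Delta_{\mathrm{tan}})u+u^5$) the null-form trace $(|\partial_t u|^2-|\nabla u|^2-\tfrac13 u^6)\,w$. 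The argument further requires delicate bookkeeping of the corner terms on $M_S^T\cap\partial\Omega_S^T$, which cancel only because of the specific geometry of $Z$; this is not visible from the Morawetz multiplier at all.
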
 
\begin{rem} 
For the solution of the {\em linear} wave equation, $\Box u =0$, taking the trace on the boundary $\partial \Omega$ in~\eqref{eq.besov} gives 
\begin{multline}\label{eq.besovbis}
 \|u\mid _{\partial\Omega}\|_{L^5((0,1);W^{\frac 1 {10},5} ( \partial \Omega))}+ \|u\mid _{\partial\Omega}\|_{L^\infty((0,1); H^{1/2} ( \partial\Omega))}\\
\leq C\|u\|_{L^5((0,1);W^{\frac 3 {10},5}_N ( \Omega))}+ \|u\|_{L^\infty((0,1); H^{1} ( \Omega))}
  \leq C
 \left(\|u_0\|_{H^1_N( \Omega)} + \|u_1\|_{L^2( \Omega)}\right)
\end{multline} 
And interpolating between the two estimates in the l.h.s. of~\eqref{eq.besovbis} gives

$$\|u\mid _{\partial\Omega}\|_{L^6((0,1)\times \partial \Omega)}
  \leq C
 \left(\|u_0\|_{H^1_N( \Omega)} + \|u_1\|_{L^2( \Omega)} \right)
$$
As a consequence, one could infer that the $u^6$ term in the r.h.s. of~\eqref{eq.apriori2} is not the most important part of the estimate. However, we will not use this fact here and shall prove the estimate~\eqref{eq.apriori2} as a whole
\end{rem}

\begin{proof} 
The main step in the proof of Proposition~\ref{prop.apriori2} is an
integration by parts using suitable weight and vector field.
\begin{lem}
There exists a smooth vector field $Z$ (depending on $(t,x)$) defined
on a neighborhood of $x_0$ in $[-t_0,0)\times \overline \Omega$ such that 
\begin{enumerate}
\item We have $Z_{\mid\partial\Omega}= \partial_n +\tau$ where $\partial_n$ is the interior normal derivative to the boundary
and $\tau$ is a vector field which is tangent to the boundary;
\item  The restriction of $Z$ to the sphere $S_t=\{x\in \Omega; |x| = -t\}$ is tangent to that sphere. 
\end{enumerate}
\end{lem}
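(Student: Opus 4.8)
The plan is to obtain $Z$ as a renormalised projection of the interior unit normal of $\partial\Omega$ onto the tangent planes of the spheres centred at the tip of the cone. Projecting the normal onto those tangent planes automatically yields a field tangent to every $S_t$, which is property~(2); this projection does not have the right normal component on $\partial\Omega$, but it has one which is $1+O(|x|^2)$ near the tip, and multiplying by a suitable positive scalar -- which does not affect tangency to the spheres -- restores the normalisation, giving property~(1). Thus the two conditions are compatible and can be arranged one after the other.

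In detail, let $\rho$ be the signed distance to $\partial\Omega$, positive inside $\Omega$ and smooth on a tubular neighbourhood of $\partial\Omega$, and set $N=\nabla\rho$, a smooth unit field with $N_{\mid\partial\Omega}=\partial_n$. For $x\ne 0$ put
$$
W(x)=N(x)-\frac{N(x)\cdot x}{|x|^{2}}\,x ,
$$
so $W(x)\cdot x\equiv 0$; hence, read as a $t$-independent spatial vector field, $W$ is tangent at each point to the sphere of radius $|x|$ centred at the origin through that point, so $W_{\mid S_t}$ is tangent to $S_t$ for every $t$ (property~(2)). On $\partial\Omega$ near the tip, which has been placed at the origin, the smoothness of $\partial\Omega$ gives $N(x)\cdot x=O(|x|^{2})$ -- in a graph parametrisation of $\partial\Omega$ over its tangent plane at $0$, both $N(x)\cdot x$ and the height function vanish to second order -- so $W\cdot N=1-(N\cdot x)^{2}|x|^{-2}=1+O(|x|^{2})$ and, shrinking the neighbourhood, $W\cdot N\ge\tfrac12$ there, in particular $W\ne 0$. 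Now let $g$ be a smooth, strictly positive function on a (punctured) neighbourhood of the tip with $g_{\mid\partial\Omega}=1/(W\cdot N)_{\mid\partial\Omega}$ -- for instance $1/(W\cdot N)$ composed with the smooth nearest-point retraction onto $\partial\Omega$ -- and set $Z:=g\,W$. Then $Z$ is still tangent to each $S_t$, while on $\partial\Omega$ its component along $\partial_n$ is $g\,(W\cdot N)=1$, so $Z_{\mid\partial\Omega}=\partial_n+\tau$ with $\tau:=Z_{\mid\partial\Omega}-\partial_n$ of vanishing normal component, hence tangent to $\partial\Omega$ (property~(1)).

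The one delicate point -- essentially the only one -- is the behaviour at the tip $x_0=0$. The map $x\mapsto (N(x)\cdot x)|x|^{-2}x$ is homogeneous of degree $0$, so $W$, and hence $Z$, is bounded but not continuous there; indeed, letting $x\to 0$ along admissible directions in the identity $Z(x)\cdot x=0$ forces $Z(0)=0$, which is incompatible with the normal component being $1$ on $\partial\Omega$ near the tip, so no vector field smooth up to the tip can satisfy both conditions. This is harmless for the way the lemma is used: the origin lies on no sphere $S_t$ (each has radius $-t>0$), and in the integration by parts of Proposition~\ref{prop.apriori2} the field $Z$ is only evaluated on $K_S^T$ near the lateral cone $M_S^T$, where $|x|=-t$ is bounded away from $0$. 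One therefore takes $Z$ defined and smooth on a neighbourhood of $x_0$ with $x_0$ removed -- equivalently on a region $\{\,c\,|t|\le |x|\le -t\,\}$ near the tip -- which is all the subsequent argument requires; the remaining checks (smoothness of $\tau$ up to $\partial\Omega$, and the choice of the size of the neighbourhood) are routine.
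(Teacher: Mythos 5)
Your construction captures the right geometric idea -- project the interior unit normal $N=\nabla\rho$ onto the tangent planes of the spheres $\{|x|=\text{const}\}$, then renormalise so that the normal component on $\partial\Omega$ is one -- and this is indeed close in spirit to what the paper does. However, the way you handle the singularity at the tip is a genuine gap, and your ``defense'' of it contains two errors.

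First, you claim that in the proof of Proposition~\ref{prop.apriori2} ``the field $Z$ is only evaluated on $K_S^T$ near the lateral cone $M_S^T$, where $|x|=-t$ is bounded away from $0$.'' This is false: the integration by parts in Proposition~\ref{marre} and Lemma~\ref{prop.restes2} is taken over the \emph{solid} truncated cone $K_S^T=\{|x|<-t\}\cap\Omega_S^T$, which contains the whole segment $\{x=0\}\times[S,T]$ and the region $\{|x|\ll|t|\}$ of positive measure. The commutator coefficients $q_{ij}$ of $w(x)[\Box,Z]$ must therefore be defined and satisfy the decay hypotheses \eqref{eq:coeff} on \emph{all} of $K_S^T$, not just near $M_S^T$. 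With your time-independent $W(x)=N-(N\cdot x)|x|^{-2}x$, one has $\nabla W\sim|x|^{-1}$ and $\nabla^2 W\sim|x|^{-2}$ as $x\to0$; even after multiplication by $w=O(r^2)$, the second-order derivative bound $|\partial^\alpha q_{ij}|\lesssim|t|^{1-|\alpha|}$ for $|\alpha|=2$ fails when $|x|\ll|t|$, and more fundamentally the integrations by parts and Green's identity in the proof of Proposition~\ref{prop.apriori2} are no longer justified for a field that is discontinuous on the axis.

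Second, your claim that ``no vector field smooth up to the tip can satisfy both conditions'' is incorrect as a reading of the lemma. The tangency condition only constrains $Z$ \emph{on} the spheres $S_t=\{|x|=-t\}$, not on the whole range $0<|x|\le-t$. By allowing $Z$ to depend on $t$ -- which the statement explicitly permits -- one can set $Z=\partial_n$ on $\{|x|\le|t|/4\}$ (where there is no sphere constraint) and a sphere-tangent field only near $|x|=-t$. This is exactly what the paper does: it introduces cutoffs $\phi(x_3/t)$, $\phi(r/t)$ and defines $X_t$ to equal $\partial_3$ near the axis and to equal a tangent field $X_2=\partial_3-(x_3/r)\partial_r$ near the sphere, then renormalises as you do. The resulting $X_t$ is genuinely time-dependent and smooth on every neighbourhood of $x_0$ in $[-t_0,0)\times\overline\Omega$. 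In short, your $W$ needs precisely that cutoff in $|x|/|t|$ -- blending $W$ with $\partial_n$ in the region $|x|\le|t|/4$ -- to turn your idea into a proof of the lemma as stated; as written, your field is not smooth and the justification that this does not matter does not hold up.
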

\begin{proof}
 Performing a linear orthogonal change of variables, we can assume that $\partial_3=\partial_{n(0)}$, with $n(0)$
the normal vector to
$\partial\Omega$ at $x=0$. We now build a
vector field $Z \in C^\infty ( \Omega; T\Omega)$ (as such, it does not
have a $\partial_t$ component) such that its restriction to
$\partial\Omega$ is close to $\partial_n$ (which, in a
neighborhood of $x=0$, $B(0,\eta)$, is essentially $\partial_3$ up to
an $O(\eta)$ term).

Consider the half-sphere $x_3=\sqrt{1-r^2}$, with
$r^2=x_1^2+x_2^2$. To define a vector
field $X$ in
the zone $x_3<\frac 3 4$ (thus multiplying by a cut-off $\phi(x)$ such
that $\phi=0$ for $x>3/4$ and $\phi=1$ for $x<1/4$), we set
$X=X_1=\partial_3$ for
$r<1/4$; for $r>3/4$, we set
$X=X_2=\partial_3-r^{-1} x_3 \partial_r$. On the half-sphere, $X$ will be
tangent and pointing in the same direction as $\partial_3$ (or $\partial_\nu$). Now, we may
smoothly connect both zones and define $X$ within the entire
half-sphere: consider $\phi \in C^\infty_0 (- 1/2, 1/2)$ equal to $1$ on $(0, 1/4)$ and 
$$
X=\phi(x_3)(\phi(r)\partial_3 +(1-\phi(r))(\partial_3-\frac{x_3}{r} \partial_r)).
$$
Then one rescales $(x_3,r)$ to be $(x_3/t,r/t)$. Note that on the $x_3=0$
axis, $X=\partial_3$. We now set
$$
X_t=\phi(x_3/t)(\phi(r/t) X_1(x/t)+(1-\phi(r/t)) X_2)(x/t).
$$
Remark that $X_1,X_2$, as previously defined, are time independent (and smooth in zones where
$X\neq 0$). The vector field $X_t$ is tangent to the sphere $S_t$ by
definition. However, it requires to be renormalized so that its
component on the normal to the spatial boundary is one: define $z(x)\in
\partial\Omega$ to be the orthogonal projection to the (spatial) boundary, we set
$$
Z=\frac{1}{X_{t}(z(x)) \cdot n(z(x))} X_t,
$$
which is easily seen to satisfy all our requirements.
\end{proof}

Now we define a smooth weight $w(x)$, such that its restriction to the
boundary $\partial \Omega$ is $x\cdot n(x)$, and such that
$|w(x)|=O(|r|^2)$: $w(x)=z(x)\cdot n(z(x))$ is such a weight.

To prove Proposition \ref{prop.apriori2}, we start with the following commutator estimate.
\begin{prop} 
\label{marre}
There exists $C>0$ such that 
\begin{equation}
   \left|\int_{K^T_S\cap \Omega_S^T} [(\partial_t^2- \Delta),Z]u(t,x)\cdot
   {u} (t,x) w(x) dx dt\right| \lesssim  |S|^2 (E_0+E_0^{\frac 2 3}).
\end{equation}
\end{prop}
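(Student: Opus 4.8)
The plan is to expand the commutator $[(\partial_t^2-\Delta),Z]$ explicitly. Since $Z=\sum_j Z^j(t,x)\partial_j$ has no $\partial_t$-component, a direct computation gives
$$
[(\partial_t^2-\Delta),Z]u = (\partial_t^2 Z^j)\partial_j u + 2(\partial_t Z^j)\partial_t\partial_j u - (\Delta Z^j)\partial_j u - 2(\partial_k Z^j)\partial_k\partial_j u,
$$
so that $[(\partial_t^2-\Delta),Z]u$ is a sum of terms, each of which is a smooth coefficient times either a first-order derivative of $u$ or a second-order derivative of $u$ with at most one time derivative. The coefficients are bounded by the $C^2$-norm of $Z$, which on the light cone $K^T_S$ (where $|t|\le\varepsilon$ is small and $Z$ is built by the $O(|t|^{-1})$-type rescaling of fixed profiles) blows up like a fixed negative power of $|t|$; one must track these powers carefully. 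The term with $\partial_k\partial_j u$ is the dangerous one, and the idea is to integrate by parts once more in $x_k$ to move that derivative off $u$, at the cost of producing terms in which only first derivatives of $u$ and the weight $w(x)$ and derivatives of the coefficients appear — plus a boundary term on $K^T_S\cap\partial\Omega^T_S$ and on $M^T_S$. The boundary contribution on $\partial\Omega$ is controlled because $w$ vanishes to second order there, $|w(x)|=O(|r|^2)=O(|t|^2)$, and the trace of $\nabla u$ on the boundary, while not in $H^1$, is paired against the small weight; the contribution on $M^T_S$ is controlled by the flux, which is uniformly bounded by $E_0$ and tends to $0$ (see~\eqref{eq.fluxest},~\eqref{eq.flux}).

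After these manipulations one is left with a spacetime integral over $K^T_S$ of a quantity pointwise bounded by (smooth coefficient)$\times |\nabla_{t,x}u|\,|u|\,|w(x)| + $ (lower order), together with the $|u|^6$-type contribution coming from the potential term in the equation when one uses $(\partial_t^2-\Delta)u=-u^5$ to trade a second time derivative (this accounts for the $E_0^{2/3}$ in the right-hand side, via Hölder and $\|u\|_{L^6}^6\lesssim E_0$ on slices). On the cone we use the crucial geometric fact: the weight $w$ satisfies $|w(x)|=O(|r|^2)$ and on $K^T_S$ we have $|r|=|x|<-t\le|S|$, so $|w(x)|\lesssim |S|^2$ uniformly. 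Hence the spacetime integral is bounded by
$$
|S|^2 \int_{K^T_S} |\nabla_{t,x}u|\,|u|\,\frac{dx\,dt}{|t|^{m}},
$$
for the relevant power $m$ coming from the coefficients, and one closes the estimate by Cauchy–Schwarz in $x$ on each time slice against the local energy, which is $\le E_0$, integrating the resulting bound in $t$ over $[S,T]\subset[S,0]$; the $t$-integral converges precisely because the cone shrinks and $|S|$ is small, and one checks the powers of $|t|$ combine to give an extra factor that is $O(1)$ (or better) rather than divergent, leaving the clean $|S|^2(E_0+E_0^{2/3})$ bound. The same Cauchy–Schwarz bounds the genuinely first-order terms directly by $|S|^2 E_0$.

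The main obstacle is the careful bookkeeping of the weights and of the singular factors $|t|^{-k}$ coming from the construction of $Z$: one must verify that after all integrations by parts the worst combination of the $O(|r|^2)$ gain from $w$, the $|r|<-t$ restriction on the cone, and the negative powers of $|t|$ in the coefficients of $Z$ and its derivatives still yields an integrable—indeed $O(|S|^2)$—spacetime integral, and that the extra boundary term created on $\partial\Omega$ by the second integration by parts is either absent (by tangency of $\tau$ and the structure of $w$) or again absorbed by the $O(|r|^2)$ vanishing of $w$. Controlling the trace of $\nabla u$ on $\partial\Omega$ is exactly what we do not have for free — this is the whole point of the Neumann difficulty — so the argument must be arranged so that every boundary term genuinely carries the $O(|t|^2)$ weight and is paired only with quantities (like $u$ itself, or tangential derivatives, or the already-controlled flux) that do have good bounds.
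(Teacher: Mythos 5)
Your proposal correctly identifies the algebraic structure of the commutator, the key role of the vanishing of $w$ to order $|t|^2$, and the need to integrate by parts the second-order terms. However, there is a genuine gap at the crux of the Neumann problem, which you acknowledge at the very end but do not resolve: the boundary term on $K_S^T\cap\partial\Omega_S^T$ produced by the integration by parts. You write that ``the trace of $\nabla u$ on the boundary, while not in $H^1$, is paired against the small weight,'' and later list ``tangential derivatives'' among the quantities that ``have good bounds.'' Neither is true: for finite-energy solutions, no $L^2$ bound on the trace of $\nabla_{\mathrm{tan}}u$ on $\partial\Omega$ is available (the $H^1$ trace is genuinely unbounded even for the linear problem, as the paper notes), and a small multiplicative weight cannot salvage an integral you cannot bound at all. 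The paper's actual mechanism is different: after using the Neumann condition to replace $\partial_j$ by its tangential component, one integrates by parts \emph{a second time on $\partial\Omega$}, so that the boundary term becomes $\int \partial_j(\widetilde q_{ij}) u^2 \, d\sigma dt$ plus a contribution on the edge $M_S^T\cap\partial\Omega_S^T$. The first piece needs only the trace theorem for $u$ (giving an $L^4(\partial\Omega)$ bound), and the edge piece needs Lemma~\ref{lem.restr}, which controls $\|u\|_{L^4(M_S^T\cap\partial\Omega_S^T)}$ by the flux via a scaled dyadic trace argument. Without this second integration by parts and Lemma~\ref{lem.restr}, your boundary term is simply not estimable.

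Two further points. First, for the boundary term on $M_S^T$ to be bounded by the flux it is not automatic: one needs the structural condition~\eqref{eq:marretangent}, namely that the vector field $\sum_{ij}(\nu\cdot\partial_i)q_{ij}\partial_j$ arising in the boundary integral is \emph{tangential} to $M_S^T$, so that only tangential derivatives of $u$ appear; the paper verifies this for $w[\Box,Z]$ by computing in polar coordinates and noting the absence of a $(\partial_t+\partial_r)^2$ component. Your proposal does not check this. Second, your attribution of the $E_0^{2/3}$ contribution to ``using $(\partial_t^2-\Delta)u=-u^5$ to trade a second time derivative'' is incorrect: the commutator $[\Box,Z]$ contains no $\partial_t^2 u$ term (since $Z$ has no $\partial_t$-component), and the equation is not used in the proof of this proposition. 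The $E_0^{2/3}$ arises from a three-fold H\"older inequality in the bulk and boundary-slice terms, splitting $a\,u\,\nabla u$ into $|\nabla u|^2$ (energy), $|u|^6$ (energy, raised to $1/6$), and $|a|^3$ (volume of the cone).
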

\begin{proof} We shall make a repeated use of the following
\begin{lem}\label{prop.restes1}
Consider a function $a(x,t)$ such that
$$
 \forall (x,t); |x| \leq -t, t \in (-1,0), \quad |a(x,t)| \leq M.
$$
Then there exists $C>0$ such that for any  solution with energy $E_0$,
for all $S,T$ s.t. $-1\leq S <T<0$,
\begin{equation} 
 \Bigl| \int_{K_S^T}a(x,t) u(x,t)
\nabla_{x,t} u(x,t)\,dx dt \Bigr| \lesssim  E^\frac 2 3_0 |S|^2.
\end{equation}
\end{lem}
\begin{proof} We have
\begin{multline}
\Bigl| \int_{K_S^T}a(x,t) u(x,t) \nabla_{x,t} u(x,t) \,dx dt \Bigr| \\
\begin{aligned}
&\leq \Bigl(\int_S^T |\nabla_{x,t} u(x,t)|^2 dx dt\Bigr)^{1/2} \Bigl( \int_{K_S^T} | u(x,t)|^6 dx dt \Bigr)^{1/6}\Bigl( \int_{K_S^T} | a(x,t)|^3 dx dt \Bigr)^{1/3}\\
&\lesssim  E^{\frac 1 2+\frac 1 6 }(u) |S|^{\frac 1 2+\frac 1 6}\Bigl( \int_{K_S^T} | a(x,t)|^3 dx dt \Bigr)^{1/3} \\
&\lesssim  (E_0|S|)^{2/3} \Bigl( \int_S^T\int_{|x|<-t} dx dt
\Bigr)^{1/3}= C E^\frac 2 3_0 |S|^2,
\end{aligned}
\end{multline}
which achieves the proof.
\end{proof}
We now prove a second lemma which will immediately yield Proposition \ref{marre}:
\begin{lem}\label{prop.restes2}
Assume that $Q=\sum_{ij} q_{ij}\partial^2_{ij}$ is a smooth (space-time) second order operator with real
coefficients $(q_{ij})_{ij}$, $0\leq i,j\leq 3$, satisfying
\begin{equation}
  \label{eq:coeff}
  \forall i,j,\,\forall (x,t); |x| \leq -t, t \in
(-1,0), \quad |\partial_{x,t}^\alpha q_{ij}(x,t) |\lesssim  {|t|^{1-|\alpha|}}, \qquad |\alpha | \leq 2.
\end{equation}
Moreover, assume that (recalling \eqref{eq:normalcone} for the
definition of $\nu$)
\begin{equation}
  \label{eq:marretangent}
  T=\sum_{i,j} ((\nu \cdot \partial_i) q_{ij}) \partial_j \text{ is a vector field which is
tangent to } M_S^T;
\end{equation}
 then, for any solution with energy $E_0$, and any $ -1 \leq S <T<0$,
\begin{equation} 
 \Bigl| \int_{K_S^T}Q(x,t,D_{x,t}) u(x,t)
u(x,t)dx dt \Bigr| \lesssim  |S|^2 (E_0+E_0^\frac 2 3).
\end{equation}
\end{lem}
\begin{proof}
We integrate by parts
\begin{multline}\label{eq.ipp}
\int_{K_S^T}Q(x,t,D_{x,t}) u(x,t) u(x,t)dx dt\\
\begin{aligned}
&= - \sum_{i,j} \int_{K_S^T}\partial_i (q_{i,j}(x,t))\partial_j u(x,t)  u(x,t)dx dt \\
&\qquad- \sum_{i,j} \int_{K_S^T}q_{i,j}(x,t)\partial_j u(x,t) \partial_i ( u(x,t))dx dt \\
&\qquad+\sum_{i,j}\Bigl[\int _{D_T}q_{0,j}(x,t)\partial_j u(x,t) u(x,t)) dx -\int _{D_S}q_{0,j}(x,t)\partial_j u(x,t) u(x,t)) dx\Bigr]\\
&\qquad+ \sum_{i,j}\int _{M_S^T}(\nu(x,t)\cdot \partial_i) q_{i,j}(x,t)\partial_j u(x,t) u(x,t)) d\rho \\
&\qquad+\sum_{i,j}\int _{K_S^T \cap \partial \Omega_S^T}
(-n(x)\cdot\partial_i) q_{i,j}(x,t)\partial_j u(x,t) u(x,t)) d\sigma dt\\
&= I+II+III+IV+V,
\end{aligned}
\end{multline} 
where we recall $\nu(x, t) =\frac{1}{\sqrt 2} (\partial_r + \partial
_t)$ to be the outward normal vector to $M_S^T$ and $n(x)$ to be the
inward normal vector to $\partial \Omega$, while $d\rho$
(resp. $d\sigma$) is the induced measure on $M_S^T$ (resp. $\partial\Omega$).

The contribution of $I$ is dealt with using
Lemma~\ref{prop.restes1}. Next, 
$$
II\lesssim \|q_{i,j}\|_{L^\infty(K_S^T)}|S| E(u)\lesssim  |S|^2 E_0.
$$
The contribution of $D_S$ in $III$ is bounded (using H\"older inequality) by 
\begin{multline}
 \Bigl(\int_{D_S} |\nabla_{x,t} u(x,t)|^2 dx dt\Bigr)^{1/2} \Bigl( \int_{D_S} | u(x,t)|^6 dx dt \Bigr)^{1/6}\Bigl( \int_{D_S} | q_{i,j}(x,t)|^3 dx  \Bigr)^{1/3}\\
\lesssim  E(u)^{2/3} \Bigl( \int_{D_S} | q_{i,j}(x,S)|^3 dx \Bigr)^{1/3} 
\lesssim  E_0^{2/3} |S|\Bigl( \int_{|x|<|s|} dx  \Bigr)^{1/3}= C E_0^{2/3} |S|^2.
\end{multline}
We deal with the contribution of $D_T$ to $III$ similarly. To bound
the contribution of $IV$, we remark that according to our
assumptions on $Q$, the vector field 
$$
\frac 1
{\sqrt{2}} T = \sum_{i,j} (\nu(x,t)\cdot \partial_i)
q_{i,j}(x,t)\partial_j
$$
is tangential to $M_S^T$. As a consequence (using H\"older inequality),
\begin{multline}
\label{3.18}
IV\lesssim  \Bigl(\int_{M_S^T} |\nabla_{tan} u(x,t)|^2 d\rho \Bigr)^{1/2} \Bigl(
\int_{M_S^T} | u(x,t)|^6 d\rho \Bigr)^{1/6}\Bigl( \int_{M_S^T} |
q_{i,j}(x,t)|^3 d\rho  \Bigr)^{1/3}\\
\lesssim  (\text{Flux}(u, M_S^T))^{2/3} \Bigl( \int_{M_S^T} | q_{i,j}(x,t)|^3 d\rho \Bigr)^{1/3} \\
\lesssim  (\text{Flux}(u, M_S^T))^{2/3}|S|^2\lesssim  E_0^{\frac 2 3} |S|^2
\end{multline}
where in the last inequality we used~\eqref{eq.locener}.

 It remains to bound the contributions of $V$ in the right hand side of~\eqref{eq.ipp}. Using the Neumann boundary condition, we can replace the vector fields 
$\partial_j$ by their tangential components $\partial_j - (\partial_j
\cdot n(x)) n(x)$. In a coordinate system $y$ in the boundary and
abusing notation for $\partial_{y_i}$, we now have to compute 
\begin{equation}
V=\int_{K_S^T\cap \partial \Omega}\widetilde {q}_{i,j}(y,t)\partial_j u(y,t) u(y,t)) \,dy dt\end{equation}
with $\widetilde {q}$ satisfying the same estimates as $q$, namely \eqref{eq:coeff}. Integrating by parts gives
\begin{equation}
\label{eq.5.1}
 V= \frac 1 2 \int_{K_S^T\cap \partial \Omega_S^T}\partial_j(\widetilde
 {q}_{i,j}(y,t)) |u(y,t)|^2 \,d\sigma dt + \frac 1 2 \int_{M_S^T\cap
 \partial \Omega_S^T}\widetilde {q}_{i,j}(y,t) |u(y,t)|^2 \,d\rho= V.1+V.2,
\end{equation}
where the first term in the right hand side appears when the
derivative hits the coefficients while the second term comes from the
contribution of the boundary of $K_S^T\cap \partial \Omega_S^T$). 

To conclude, we use the following
\begin{lem}\label{lem.restr}
For any solution $u$ and any $S<T<0$, we have
\begin{equation}\label{eq.bordbord}
 \|u_{\mid M_S^T\cap \partial \Omega_S^T}\|^2_{L^4(M_S^T\cap \partial
 \Omega_S^T)}\lesssim  \text{ Flux } (u, M_S^T)\lesssim  E_0.
\end{equation}
\end{lem}
\begin{proof}
 Let us first conclude the proof
of Lemma~\ref{prop.restes2} assuming Lemma \ref{lem.restr}. To deal with the first term in the
right hand side of~\eqref{eq.5.1}, we use  H\"older inequality and
obtain (using the trace theorem and then Sobolev embedding on $\partial\Omega$)
\begin{multline}
|V.1| \lesssim  \Bigl(\int_{\partial \Omega_S^T}|u|^4 d\sigma
 dt\Bigr)^{\frac 1 2}\Bigl(\int_{K_S^T\cap \partial \Omega_S^T}
 |\partial_j(\widetilde {q}_{i,j}(x,t))|^{2}\Bigr)^{\frac 1 2}\\
\lesssim |S|^{\frac 3 2 +\frac 1 2} \sup_t \Bigl(\int_{\partial \Omega}|u|^4 d\sigma
 \Bigr)^{\frac 1 2}\lesssim  |S|^2 E_0.
\end{multline}
To deal with the second term in the right hand side of~\eqref{eq.5.1}, we also use  H\"older inequality and obtain (using Lemma~\ref{lem.restr})
\begin{equation}
|V.2| \lesssim  \Bigl(\int_{M_S^T\cap \partial \Omega_S^T} |u(x,t)|^4
 d\sigma\Bigr)^{1/2}\Bigl(\int_{M_S^T\cap \partial \Omega_S^T}
 |(\widetilde {q}_{i,j}(x,t))|^{2}\Bigr)^{1/2}\lesssim |S|^2 E_0.
\end{equation}
which ends the proof of Lemma~\ref{prop.restes2}\end{proof}
Let us now prove Lemma~\ref{lem.restr}. The idea is roughly to take
benefit of the fact that the flux controls the $H^1$ norm on the cone
$M_S^T$; consequently by trace theorems it controls the $H^{1/2}$
norm of the restriction of $u$ on the intersection of the cone and the
spatial boundary. Hence, by
Sobolev's theorem, it controls the $L^4$ norm. However, the geometry of the cone
becomes singular near $t=0$ and we have to be careful when implementing
this idea. For this we decompose the time interval $(S,T)$ into a
union of dyadic intervals
$$ (S,T)= \cup_{j=j_0}^{j_1-1} ( t_j,t_{j+1}), \quad t_j = -2^{-j} $$
 (assuming for simplicity that $S= - 2^{-j_0}$ and $T= -2^{-j_1}$), and we decompose the integral~\eqref{eq.bordbord} into
$$
 \sum_{j=j_0}^{j_1-1} \|u_{\mid M_S^T\cap \partial
  \Omega_S^T}\|^4_{L^4(M_S^T\cap \partial \Omega_S^T)}
$$
For each integral, we perform the change of variables 
$$ (t,x) \mapsto (s= 2^{j}t, y = 2^j x), u_j(s,y) = u(2^js, 2^jy)$$
and we have to bound for any $j_0 \leq j \leq j_1 -1$
$$ \|(u_j)_{\mid2^j (M_{-2^{-j}}^{-2^{-j-1}}\cap \partial \Omega_{-2^{-j}}^{-2^{-j-1}})}\|^4_{L^4}$$
Observe now that 
$$2^j (M_{-2^{-j-1}}^{-2^{-j-2}}\cap \partial \Omega_{-2^{-j-1}}^{-2^{-j-2}})$$ is a smooth
family of hypersurfaces of the cone  (included in the smooth part of
the cone corresponding to  $\{t\in (1, 1/2)\}$ and consequently, the
trace theorem applies with uniform constants; tracking the change
of variables yields
$$
 \|u_{ \mid(M_{-2^{-j}}^{-2^{-j-1}}\cap \partial
 \Omega_{-2^{-j}}^{-2^{-j-1}})}\|^4_{L^4}\lesssim  \|u _{\mid (M_{-2^{-j-1}}^{-2^{-j-2}}\cap
 \partial \Omega_{-2^{-j-1}}^{-2^{-j-2}})}\|^4_{H^1}.
$$
Summing the pieces back provides the desired estimate.
\end{proof}
Back to proving Proposition \ref{marre}, one may easily see that the
coefficients of the second order operator $w(x) [\Box,Z]$ satisfy the
decay conditions of Lemma~\ref{prop.restes1} and \ref{prop.restes2}. We
are left with \eqref{eq:marretangent}: from decomposing the Laplacian in polar
coordinates, the first order term ($r^{-1}\partial_r$) is harmless, and so is the
angular part. Then, if
$A=\partial^2_t-\partial^2_r=(\partial_t+\partial_r)(\partial_t-\partial_r)$,
$w(x) [A,Z]$ doesn't have a $ (\partial_t+\partial_r)^2$ component,
hence it satisfies \eqref{eq:marretangent}.
\end{proof}

Having proved that
\begin{equation}\label{eq.2.3}
 \Bigl|\int_{K^T_S\cap \Omega} [(\partial_t^2- \Delta), Z] u(t,x)
 {u} (t,x) w(x) dx dt\Bigr| \lesssim  S^2 (E_0+E_0^{\frac 2 3}),
\end{equation}
we may return to the proof of Proposition \ref{prop.apriori2} and perform integration by parts with the
$\Box$ operator in \eqref{eq.2.3}. Denote by $D=\Omega_S^T\cap K_S^T$ the
space-time domain of integration. Its boundary $\partial D$ will be
the reunion of two time slices, $D_T\cap \Omega$ and $D_S\cap \Omega$,
one space slice (on the boundary) $\partial\Omega_S^T\cap K_S^T$  and the cone
boundary inside the domain, $\Omega_S^T\cap M_S^T$. 

As
\begin{multline}\label{eq.2.4}
 \int_{D} [(\partial_t^2- \Delta), Z] u(t,x) \cdot
 u(t,x) w (x)dx dt =  \int_{D} Z (u^5) u (t,x)
 w(x) dx dt\\ 
 {}+\int_{D} (\partial^2_t-\Delta) Z
 (u) \cdot u (t,x) w(x) dx dt,
\end{multline}
Leaving aside the first term, we compute the remaining term, namely
$$
\int_{D} w( x, t) u
\Box Z u \,dxdt.
$$
We will
apply Green's second identity: recall that over 4-dimensional domains
$V$, with $\nu_{\partial V}$ the outward normal to the boundary,
\begin{equation}
  \label{eq:greenid}
  \int_V \phi \Box \psi-\psi \Box \phi =\int_{\partial V}\left( \psi\begin{pmatrix} -\partial_t
  \phi\\ \nabla \phi\end{pmatrix}\cdot \nu_{\partial V}- \phi\begin{pmatrix} -\partial_t
  \psi\\ \nabla \psi\end{pmatrix}\cdot \nu_{\partial V} \right)\,.
\end{equation}
Let us start with $Zu \Box(uw)= Zu (u\Box w+w \Box u+Q_0(w,u))$: The last term is controlled by $O(S^2) E(u)$: in fact,
one has 2 derivatives hitting the $u$'s (one from $Z$ and one from
$Q_0$), and the remaining one from $Q_0$ on $w$, which
loses an $S$; and then integration over $t$ which regains the lost
factor $S$. The first term is no worse: one loses both factors $S$ in
deriving $w$ twice, but on the other hand we get
$$
\int_{D_t} u^2 dx \leq (\int_{D_t} dx)^{\frac 2 3} (\int_{D_t} u^6
dx)^{\frac 1 3}\lesssim  S^2 E(u)^{\frac 1 3},
$$
since the volume of the ball $D_t$ is $t^3$ which produces the $S^2$
factor. The middle term is, after substitution,
$$
\int_D w(x)Zu(x,t) (-u^5) \,dxdt,
$$
which may be added to the term we left in \eqref{eq.2.4}, to get
\begin{multline}\label{eq.2.1}
4 \int_{D} (Z u) u^5 w(x) \,dx dt = \frac 2 3 \int_D Z(u^6)
w(x)\,dxdt\\
=-\frac 2 3 \int_D u^6 Z(w)\,dxdt-\frac 2 3\int_{ \partial\Omega_S^T\cap K_S^T}
u^6 w d\sigma,
\end{multline}
as $Z$ is tangent to both the time slices and the cone (hence, the
boundary contributions of these regions vanish). Recalling that $Z(w)=O(S)$,
$$
 \Bigl|\int_{K^T_S\times \Omega} u^6 Z w(x) \,dx dt\Bigr|  \lesssim 
 S^2 E(u),
$$
and we collect a term
\begin{equation}
  \label{eq:potentieltrace}
  -\frac 2 3\int_{ \partial\Omega_S^T\cap K_S^T}
u^6 w d\sigma,
\end{equation}
for later use.

We are now left with a space-time boundary term $J$ coming from our
application of \eqref{eq:greenid}, which is a difference of
two terms
\begin{equation}
  \label{eq:TB}
  J=\int_{\partial D} (u w)\,  N \cdot \begin{pmatrix} \partial_t
  \\-\nabla\end{pmatrix} Zu - Z u\,  N \cdot \begin{pmatrix} \partial_t
  \\-\nabla\end{pmatrix} (uw)=J_1-J_2\,.
\end{equation}
Here $N$ is the outward normal derivative to the boundary
$\partial D$. The second
term $J_2$ splits itself in three sub-terms.
\begin{itemize}
\item The first two are boundary terms on $M_S^T\cap \Omega$ and
  $D_S\cup D_T$. The cone term is controlled by $S^2$ times the flux, as one has
  only tangential derivatives on $u$ (either $Z$ or
  $L={\sqrt 2}^{-1}(\partial_t-\partial_r)$). The time-slice terms are similarly
  controlled by $S^2$ times the energy.
\item The last one is
$$
\int_{\partial\Omega_S^T \cap K_S^T} u Zu \partial_n w + w
Zu \partial_n u \, d\sigma dt,
$$
and both terms vanish: the first one because $\partial_n w=0$ and the
second one because of the Neumann boundary condition.
\end{itemize}
We are thus left with $J_1$, which we split again in
three terms
\begin{eqnarray*}
  J_1 & = & \int_{M_S^T} w u\, (L
  Z u) \, d\rho \\
 & & {}+ \int_{D_T\cap \Omega} w u\, \partial_t Z u \, dx -
  \int_{D_S\cap \Omega} w u \,\partial_t Z u \, dx \\
 & & {}+\int_{K_S^T\cap \partial\Omega_S^T} (\partial_n Z u) \,u w
  \,d\sigma dt\\
 & = & K_1+K_2+K_3.
\end{eqnarray*}
Consider $K_3$: we chose $Z$ such that $Z_{\mid
  \partial\Omega}=\partial_n+\tau$, where $\tau$ is a tangent vector field
  to $\partial\Omega$, so that
\begin{eqnarray*}
  K_3 & = & \int_{K_S^T\cap \partial\Omega_S^T}uw \, \partial^2_n
    u+w u [\partial_n,\tau] u\,d\sigma dt\\
    & = & \int_{K_S^T\cap \partial\Omega_S^T}uw \,
    (\partial^2_t-\Delta_\mathrm{tan}) u+ u^6 w \,d\sigma dt+\int_{K_S^T\cap \partial\Omega_S^T}  w u [\partial_n,\tau] u\,d\sigma dt.
\end{eqnarray*}
We integrate by parts, to get
\begin{multline*}
   \int_{K_S^T\cap \partial\Omega_S^T}uw \,
    (\partial^2_t-\Delta_\mathrm{tan}+u^4) u\,d\sigma dt= \int_{K_S^T\cap
 \partial\Omega_S^T}(|\nabla_\mathrm{tan} u|^2-|\partial_t u|^2+u^6)
 w\,d\sigma dt\\{}+\int_{K_S^T\cap
 \partial\Omega_S^T}u \partial_{\mathrm{tan}} u(\eta w+\partial_{\mathrm{tan}}
 w\,d\sigma dt+B,
\end{multline*}
where $B$ is the boundary term to be specified below. The second term can
be dealt with as in \eqref{3.18}: $\eta$ is the coefficient in
$[\partial_n,\tau]$ which is tangent again (certainly, at least, when
applied to $u$ thanks to $\partial_n u=0$ !). Now,
\begin{equation}
  \label{eq:foireux}
B=  \int_{\partial\Omega_S^T \cap M_S^T} w u \, N_2 \cdot \begin{pmatrix}
  \partial_t \\ -\nabla_{\mathrm{tan}} \end{pmatrix} u \,d\sigma_2,
\end{equation}
where $N_2$ is the normal to $\partial\Omega\cap K_S^T$ in
$\partial\Omega$ and $d\sigma_2$ the induced (2D) measure; we leave
$B$ for later treatment.

Adding the first term and the term \eqref{eq:potentieltrace}, we get the
left-handside of \eqref{eq.apriori2}, namely
$$
 \int_{K_S^T\cap
 \partial\Omega_S^T}(|\nabla_\mathrm{tan} u|^2-|\partial_t
 u|^2+\frac{u^6} 3)
 w\,d\sigma dt
$$

We now return to $K_2$, writing (with an obvious abuse of notation for the domains)
\begin{eqnarray*}
  K_2 & = & \int_{D_T\setminus D_S} w (u [Z,\partial_t] u-\partial_t u
 Zu)\, dx\\
 & & {}+\int_{(D_T\setminus D_S) \cap \partial\Omega} w u (N\cdot
 Z)\partial_t u  \,d\sigma\\
 & & {}+\int_{(D_T\setminus D_S) \cap M_S^T} w u (N\cdot
 Z)\partial_t u.
\end{eqnarray*}
The last term vanishes, as $Z$ is tangent to $M_S^T$. The first one is
controlled by $S^2 E_0$. We collect the second one, denoted by $M$, for
later treatment: recalling that on that part of $\partial D$,
$N=-\partial_n$, we have $Z\cdot N=-Z\cdot \partial_n=-1$ and
$$
M=-\int_{(D_T\setminus D_S) \cap \partial\Omega} w u \partial_t u  \,d\sigma.
$$
We now return to $K_1$:
\begin{eqnarray*}
  K_1 & = & \int_{\Omega_S^T\cap M_S^T } w u\,
  [L, Z] u \, d\rho\\
 & & {}+ \int_{\Omega_S^T\cap M_S^T } w u\,
  Z L u \, d\rho.
\end{eqnarray*}
The commutator between two tangent vector fields is tangent, hence the
fist term is controlled through the flux like in \eqref{3.18}. By integration by parts, the
second term is
$$
-\int_{\Omega\cap M_S^T } Z(w u)\,
  L u \, d\rho+\int_{\partial(\Omega\cap M_S^T) } w u\,
  L u (Z\cdot N_1)\, d\sigma_2,
$$
where $N_1$ is the normal to $\Omega\cap M_S^T$ in $M_S^T$, and
$d\sigma_2$ the measure on the boundary. The first term is again
controlled through the flux, and we are left with the second one,
namely
$$
P=\int_{\partial(\Omega\cap M_S^T) } w u\,
  L u (Z\cdot N_1)\, d\sigma_2.
$$
At this point, the only hope is that $B,M$ and $P$ will cancel each
other, as they are integrals over a two-dimensional set which is
the intersection of $M_S^T$ and $\partial\Omega_S^T$. We decompose these
into three distinct regions:
\begin{itemize}
\item on $D_T\cap \partial\Omega$, we get  $ -w u
  \partial_t u$ from $M$ and $w u \partial_t u$ from $B$ as on this
  part, $N_2=\partial_t$. Hence, the total contribution vanishes. The
  same thing (with $N_2=-\partial_t$) applies to $D_S\cap \partial\Omega$.
\item On $(D_T\setminus D_S)\cap M_S^T$, one gets only a contribution
  from $P$: however, on this part of the boundary, $N_1=L$ , and $Z$ is tangent to the cone with no
  time component,
  hence $N_1\cdot Z=0$ and this terms vanishes as well.
\item Finally, we are left with $\partial\Omega\cap M_S^T$. we get non
  zero contributions from $B$ and $P$: both have a factor $w u$, the
  same measure, and the terms (recall for the first one that
  $\partial_n u=0$ !) are equal (respectively) to 
$$
N_2\cdot \begin{pmatrix}\partial_t\\ -\nabla \end{pmatrix} u \text{
  and }(Z\cdot N_1) \nu \cdot \begin{pmatrix}\partial_t\\-\nabla \end{pmatrix} u ,
$$
where, once again, $\nu=\frac{\partial_t+\partial_r} {\sqrt 2}$ is the normal vector
to the cone.

On the (2-dimensional) edge over which the integration is performed,
we denote by $T$ the projection of $Z$: there are two different ways
of writing $Z$ on a direct orthonormal basis on the edge:
\begin{itemize}
\item  we see the edge as the boundary
on $B$ and use 
$$
Z=T+\partial_n+(Z\cdot N_2) N_2\,;
$$
\item we see the edge
as the boundary on $P$ and use 
$$
Z=T+(Z\cdot N_1)
N_1+(Z\cdot \nu) \nu.
$$
\end{itemize}
 From our choice of $Z$,
the very last term in the second decomposition vanishes. On the other
hand, we have a two-dimensional hyperplane where $Z-T$ lives, with two
different basis, $\{-\partial_n,N_2\}$ and $\{N_1,\nu\}$ (where the
$-$ in front of $\partial_n$ results from our choice of $\partial_n$
as the inward normal direction to $\partial\Omega$, while $N_1$ is the
outward normal to $M_S^T\cap \partial\Omega_S^T$). Therefore,
$\nu\cdot N_2=-N_1\cdot \partial_n$. Now, we have
$$
\nu=\lambda \partial_n+\mu N_2, \text{ and } \nu\cdot
\begin{pmatrix}\partial_t\\-\nabla\end{pmatrix} u =\mu  N_2 \cdot
\begin{pmatrix}\partial_t\\ -\nabla\end{pmatrix} u,
$$
where we used (once again !) the Neumann boundary condition. As such,
our two remaining terms compensate exactly if $\mu (Z\cdot N_1)=-1$: but as
\begin{equation*}
  Z-T=(Z\cdot N_1)N_1=\partial_n+(Z\cdot N_2)N_2 \text{ implies } (Z\cdot
  N_1)(N_1\cdot \partial_n)=1,
\end{equation*}
we do get the desired result, namely $(\nu \cdot N_2)(Z\cdot N_1)=-1$.
\end{itemize}
As such, we have disposed with all the boundary terms and this
achieves the proof of Proposition \ref{prop.apriori2}.
\end{proof}
\subsubsection{The $L^6$ estimate}
We are now in position to prove the classical non concentration effect:
\begin{prop}
\label{noconc}
Assume that $x_0\in
  \overline{\Omega}$, and $u$ is a solution to~\eqref{eq.NLW} in the space $X_{<t_0}$, then
\begin{equation}\label{eq.nonconc}
\lim_{t\rightarrow t_0} \int _{D_t} u^6( t,x ) dx=0.
\end{equation}
\end{prop}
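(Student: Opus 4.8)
The plan is to carry out the classical non-concentration argument of Struwe~\cite{St88}, Grillakis~\cite{Gr90} and Shatah--Struwe~\cite{ShSt93,ShSt94}, using Proposition~\ref{prop.apriori2} in place of the boundary control of $\nabla u$ which is available with Dirichlet conditions (see~\cite{BLP1}) but fails here. By time translation assume $t_0=0$. If $x_0\in\Omega$, then for $|t|$ small enough the backward cone $D_t$ lies in $\Omega$, no boundary term occurs, and \eqref{eq.nonconc} is the interior statement already treated in~\cite{BLP1}; so we may assume $x_0=0\in\partial\Omega$. From \eqref{eq.locener}--\eqref{eq.flux} we shall only use: $E_{loc}$ is non-negative and non-increasing, $u_{\mid M_S^T}$ is bounded in $H^1(M_S^T)\cap L^6(M_S^T)$ uniformly in $T<0$, and $\lim_{S\to 0^-}\mathrm{Flux}(u,M_S^0)=0$.

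First I would integrate over $K_S^T$ the local identity $\partial_t Q=\mathrm{div}_x P$ carried by the densities $Q$ and $P$ of \eqref{eq.P} (the identity obtained by testing the equation against the dilation/scaling-critical multiplier $t\partial_t u+x\cdot\nabla u+u$), and apply the divergence theorem on the truncated cone $K_S^T=\Omega_S^T\cap\{|x|<-t\}$, whose space-time boundary is $M_S^T\cup(\partial\Omega_S^T\cap K_S^T)\cup D_S\cup D_T$; as in the proof of Lemma~\ref{lem.restr}, the degeneration of this frustum near its tip $t=0$ is handled by cutting $(S,T)$ into dyadic pieces $|t|\sim 2^{-j}$ and rescaling to unit scale, so that the relevant trace and energy estimates have uniform constants. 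On $M_S^T$ only the tangential derivative $Lu$, $L=\frac{1}{\sqrt 2}(\partial_t-\partial_r)$, and $u^6$ appear, hence that term is $\lesssim\mathrm{Flux}(u,M_S^T)$; on the time slices $D_S$ and $D_T$, completing the square as for the flux in \eqref{eq.fluxest} produces non-negative quantities bounded above by $E_{loc}$ and bounded below by a multiple of $\int_{D_t}u^6(t,\cdot)\,dx$.

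The only genuinely new contribution is the integral over $\partial\Omega_S^T\cap K_S^T$, and this is where the work lies. Using the Neumann condition $\partial_n u=0$, the momentum term $\partial_t u\,\partial_n u$ and the term $(\partial_n u)(x\cdot\nabla u)$ drop out, and what survives is exactly
\[
\frac{1}{2}\int_{K_S^T\cap\partial\Omega_S^T}\bigl(|\partial_t u|^2-|\nabla u|^2-\tfrac{u^6}{3}\bigr)\,n(x)\cdot x\,d\sigma dt ,
\]
i.e. precisely the quantity estimated in \eqref{eq.apriori2}, hence $\lesssim|S|^2(E_0+E_0^{2/3})$. Collecting the pieces one arrives at an inequality of the form
\[
\int_{D_T\cap\Omega}u^6(T,x)\,dx\ \lesssim\ \mathrm{Flux}(u,M_S^T)+|S|^2(E_0+E_0^{2/3})+r(S),
\]
with $r(S)\to 0$ as $S\to 0^-$; letting $T\to 0^-$ (so $\mathrm{Flux}(u,M_S^T)\to\mathrm{Flux}(u,M_S^0)$) and then $S\to 0^-$, and invoking \eqref{eq.flux}, the right-hand side tends to $0$, which is \eqref{eq.nonconc}. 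The main obstacle is exactly that $\partial\Omega$ term: lacking a uniform Lopatinskii estimate one cannot bound it term by term --- the $L^2(\partial\Omega)$ norm of $\nabla_{\mathrm{tan}}u$ is not controlled --- and the whole role of Proposition~\ref{prop.apriori2} is that this particular null-form combination, carrying the weight $n(x)\cdot x=O(|x|^2)$, is nonetheless controlled; a secondary difficulty is the geometric degeneration at the tip $t=0$, which is what forces the dyadic rescaling.
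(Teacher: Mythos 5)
Your proposal follows essentially the same route as the paper: integrate the dilation-identity (multiplier $t\partial_t u+x\cdot\nabla u+u$) over $K_S^T$, note that the Neumann condition kills the $(\partial_n u)$ terms on $\partial\Omega$ so that the surviving boundary integrand is exactly the null-form weighted by $n(x)\cdot x$ controlled by Proposition~\ref{prop.apriori2}, control the cone contribution by the flux, and pass to the limit via~\eqref{eq.flux}. Two small slips worth noting: in the final inequality the paper bounds $\int_{D_S}u^6(S,\cdot)$ (the \emph{earlier}, larger slice), not $\int_{D_T}u^6(T,\cdot)$, and the cone term actually contributes both $\mathrm{Flux}$ and $\mathrm{Flux}^{1/3}$ after H\"older; also the dyadic rescaling is only used by the paper inside the proof of Proposition~\ref{prop.apriori2} (Lemma~\ref{lem.restr}), while in this step the tip degeneracy is handled simply by interpreting the $K_S^0$, $M_S^0$ integrals as $T\to 0^-$ limits.
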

\begin{proof} We follow~\cite{St88, Gr90, ShSt93, ShSt94} and simply have
  to take care of the boundary terms. We can assume that $x_0\in
  \partial \Omega$ as otherwise these boundary terms disappear in the
  calculations below (which in this case are standard).  Unlike in~\cite{SmSo95} we cannot use any convexity assumption to obtain
  that these terms have the right sign, but
  Proposition~\ref{prop.apriori2} will serve as a substitute. We may set
  $x_0=0$ and $t_0=0$ for convenience. Integrating over $K_S^T$ the identity
$$ 
0 = \text{div} _{t,x} ( tQ+ u\partial_t u, -tP) + \frac {|u|^6} 3\,,
$$ we get (see~\cite[(3.9)--
  (3.12)]{SmSo95}),
\begin{multline*} 
0 = \int_{D_T}( TQ+ u \partial_t u )(T,x) dx - \int_{D_S} (SQ+ u \partial_t u)(S,x) dx + \frac 1 {\sqrt{2}} \int_{M_S^T} ( tQ+ u \partial_t u + x \cdot P) d\rho\\
- \int_{((S,T) \times \partial \Omega) \cap K_S^T} \nu(x) \cdot (tP) d\sigma + \frac 1 3 \int_{K_S^T} u^6 dx dt\,.
\end{multline*}
Using H\"older's inequality and the conservation of energy, we get
that the first term in the left is controled by $C T E_0$, whereas the last term is
non negative. This yields
\begin{multline} 
 - \int_{D_S} (SQ+ u \partial_t u)(S,x) dx + \frac 1 {\sqrt{2}} \int_{M_S^T} ( tQ+ u \partial_t u + x \cdot P) d\rho\\
\lesssim \int_{((S,0) \times \partial \Omega) \cap K_S^T} \nu(x) \cdot
(tP) d\sigma+ T E_0\,.
\end{multline}
By direct calculation (see~\cite[(3.11)]{SmSo95}),
\begin{multline}
\frac 1 {\sqrt{2}} \int_{M_S^T} ( tQ+ u \partial_t u + x \cdot P) d\rho\\
= \frac 1 {\sqrt{2}} \int_{M_S^T}\frac 1 t | t \partial_t u + x \cdot
\nabla_x u +u|^2 d \rho + \frac 1 2 \int_{\partial D_S}
u^2(S,x) d\sigma- \frac 1 2 \int_{\partial D_T}
u^2(T,x) d\sigma.
\end{multline}
By the trace theorem and H\"older, 
$$
 \int_{\partial D_T}
u^2(T,x) d\sigma\lesssim T \|u(T)\|^2_{H^1(D_T)}\lesssim T E_0.
$$
On the other hand (see~\cite[(3.12)]{SmSo95})
\begin{equation} 
- \int_{D_S} (SQ+ u \partial_t u)(S,x) dx\geq - \frac 1 2 \int _{\partial D_S} u^2 d\sigma- S \int_{D_S} \frac {|u|^6(S,x)} 6 dx\,.
\end{equation}
As a consequence, we obtain
\begin{multline}\label{eq.morawetz}
(-S)\int_{D_S} \frac {|u|^6(S,x)} 6 dx + \frac 1 {\sqrt{2}} \int_{M^T_S} \frac 1 t |t \partial_t u + x\cdot \nabla_x u + u |^2 d\sigma(x,t) \\
\lesssim \int_{\partial\Omega_S^T)\cap K_S^T} n(x) \cdot tP
\,d\sigma dt+T E_0\,,
\end{multline}
Taking~\eqref{eq.P} into account (and the Neumann boundary
condition), we obtain on $\partial \Omega$ 

$$
 tn(x) \cdot P = \frac 1 2 (n(x) \cdot x) \Bigl( \frac {|\partial_t
 u|^2-|\partial_x u|^2}2+\frac{|u|^6} 6\Bigr)  \,.
$$
However, for $x\in \partial \Omega$, given that $x_0=0\in \partial
\Omega$, we have
$$
 \frac{x} {|x|}=T+\mathcal{O}(x),\qquad n(x) = n(0)+ \mathcal{O}(x)
$$
where $T$ is a unit vector tangent to $\partial\Omega$ at $x_0=0$. Consequently, as $n(0) \cdot T=0$,
$$ 
n(x) \cdot x = \mathcal{O}( |x|^2), \text{ for $x\in \partial \Omega$}
$$  
and the right hand side in~\eqref{eq.morawetz} is bounded (using Proposition~\ref{prop.apriori2}) by 
\begin{equation}
 |S|^2 (E_0+E_0^{\frac 2 3})+T E_0.
\end{equation} 
Sending $T$ to zero, the second term disappears, and after dividing by
$-S$, we get
\begin{multline}\label{eq.morawetzbis}
\int_{D_S}|u|^6 \,dx  \lesssim  |S| (E_0+E_0^{\frac 2
  3})+ \frac 1 {\sqrt{2}|S|} \int_{M^0_S} \frac 1 {|t|} |t
\partial_t u + x\cdot \nabla_x u + u |^2 d\sigma(x,t)\,;
\end{multline}
finally, by H\"older's inequality and~\eqref{eq.fluxest}, we obtain
\begin{align*}
\frac 1 {\sqrt{2}|S|} \int_{M^0_S} \frac 1 {|t|} |t \partial_t u +
x\cdot \nabla_x u + u |^2 d\sigma(x,t)  & \leq \sqrt{2} \int_{M^0_S}
\frac{|x|}{|S|} | \frac x {|x|} \partial_t u - \nabla_x u|^2
d\sigma(x,t) \\
 & {}+ \sqrt{2} \int_{M^0_S} \frac {|u|^2}{|S||t|} d
\sigma(x,t)\\
 & \lesssim  \text{ Flux }(u,M_S^0) 
+\text{ Flux }(u,M_S^0)^{1/3}\,,
\end{align*}
hence,
\begin{equation}
  \label{eq.morawetzter}
\int_{D_S}  {|u|^6(S,x)}  dx  \lesssim |S|(E_0+E_0^{\frac 2 3})+ \text{ Flux
  }(u,M_S^0) 
+\text{ Flux }(u,M_S^0)^{1/3}
\end{equation}
which is exactly \eqref{eq.nonconc} thanks to~\eqref{eq.flux}.
Remark that in the calculations above all integrals on $K_S^0$ and $M_S^0$ have to be understood as the limits as $T\rightarrow 0^-$ of the respective integrals on $K_S^T$ and $M_S^T$ (which exist according to \eqref{eq.fluxest}, \eqref{eq.fluxestbis}).

\end{proof}
\subsection{Global existence} 
Once one has obtained the non-concentration result, Proposition~\ref{noconc}, 
 the remaining part
of the proof follows very closely \cite{BLP1} and we reproduce it only
to be self-contained. We consider $u$, the unique forward maximal solution to
the Cauchy problem~\eqref{eq.NLW} in the space $X_{<t_0}$. 
Assume that $t_0 < + \infty$ and consider  a point $x_0 \in
\overline{\Omega}$; our aim is to prove that $u$ can be extended in
a neighborhood of $(x_0,t_0)$, which will imply a contradiction. Up to
a space time translation, we set $(x_0, t_0)= (0,0)$.
\subsubsection{Localizing space-time estimates}
For $t<t'\leq 0$, let us denote by 
$$
\|u\|_{(L^p; L^{q})(K_t^{t'})}= \Bigl(\int_{s=t}^{t'} \Bigl(\int_{\{|x|<-s\} \cap \Omega } |u|^{q}(s,x) dx\Bigr)^{\frac p q} ds\Bigr)^{\frac 1 p}
$$ the $L^p_tL^{q}_x$ norm on $K_t^{t'}$ (with the usual modification
if $p$ or $q$ is infinite). Similarly one defines space-time norms on
boundaries.

Our main result in this section reads
\begin{prop}
\label{prop.L5W}
For any $\varepsilon>0$, there exists $t<0$ such that 
\begin{equation}
\|u\|_{(L^5; L^{10})(K_t^{0})}<\varepsilon.
\end{equation}
\end{prop}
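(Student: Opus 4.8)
The plan is to run the Struwe--Grillakis--Shatah-Struwe non-concentration machinery on the cone $K^0_t$, using Proposition~\ref{noconc} and the vanishing of the flux \eqref{eq.flux} as the sources of smallness, and Proposition~\ref{propex} together with the nonlinear bound \eqref{eq.bonstrichartz} and the trace embedding \eqref{eq.faible} as the linear/nonlinear estimates. First I would fix, by \eqref{eq.flux} and Proposition~\ref{noconc}, a time $t_1<0$ so small that $\text{Flux}(u,M^0_{t_1})\le\delta$ and $\sup_{t_1\le s<0}\int_{D_s}u^6(s,x)\,dx\le\delta$, where $\delta=\delta(E_0)>0$ is a threshold to be chosen at the end. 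By \eqref{eq.locener} the local energy on every truncated cone $K^{T}_{t_1}$ is then $\le E_0$, so $\|u\|_{L^\infty((t_1,0);H^1(K^0_{t_1}))}\lesssim E_0^{1/2}$ and $\|u\|_{L^\infty((t_1,0);L^6(K^0_{t_1}))}\lesssim\delta^{1/6}$ (for $|t_1|\le1$). It then suffices to prove a bound of the form $\|u\|_{(L^5;L^{10})(K^0_{t_1})}\lesssim_{E_0}\delta^{\theta}$ for some $\theta>0$, since choosing $\delta$ small relative to $\varepsilon$ yields the claim.

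Next I would set up a continuity argument: the quantity $s\mapsto\|u\|_{(L^5;L^{10})(K^s_{t_1})}$ is nondecreasing, continuous on $[t_1,0)$, vanishes at $t_1$, and is finite for every $s<0$ because $u\in L^5_{\mathrm{loc}}L^{10}$; hence it is enough to prove the a priori implication: there is $\eta_0=\eta_0(E_0)>0$ such that $\|u\|_{(L^5;L^{10})(K^s_{t_1})}\le\eta_0$ forces $\|u\|_{(L^5;L^{10})(K^s_{t_1})}\lesssim_{E_0}\delta^{\theta}$, which (for $\delta$ small) bootstraps to a bound valid up to $s\uparrow0$. To prove the implication, I would decompose $K^s_{t_1}$ into the dyadic-in-$|t|$ slabs $\Sigma_j=K^{t_1 2^{-j-1}}_{t_1 2^{-j}}$ (only finitely many meet $K^s_{t_1}$ since $s<0$), on each of which the $(L^5;L^{10})$ norm is $\le\eta_0$, and rescale each $\Sigma_j$ to unit size. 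This rescaling is licit because the equation $(\partial_t^2-\Delta)u+u^5=0$, the energy, the flux, the $L^\infty L^6$ norm and the Strichartz norm are all invariant under the critical scaling; it replaces $\Omega$ by $\Omega_j$, which near $x_0=0$ flattens to a half-space, with the constant in Proposition~\ref{propex} (ultimately in \eqref{eq.proj}) uniform in $j$. On each rescaled slab I would use finite propagation speed (valid with Neumann conditions) to localize, apply Proposition~\ref{propex}, and absorb the nonlinearity via \eqref{eq.bonstrichartz} and \eqref{eq.faible}: the factor $\|u\|^{3/10}_{L^\infty L^6}\lesssim\delta^{1/20}$, the factor $\|u\|^{7/10}_{L^\infty H^1}\lesssim E_0^{7/20}$, and three of the four factors $\|u\|_{L^5L^{10}}\le\eta_0$, so the coefficient of the remaining $W^{\frac3{10},5}_N$ norm is made $<1/2$ once $\eta_0$ and $\delta$ are small depending only on $E_0$. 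What survives on the right is the linear/Cauchy contribution of the slab together with traces on $\partial\Omega$.

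The hard part will be precisely this last point: when these linear and boundary contributions are summed over $j$, they must be controlled by $\text{Flux}(u,M^0_{t_1})\le\delta$ and by $|t_1|E_0$ — not by the total local energy, which need not be small if energy concentrates at $(x_0,0)$. For the interior terms this is the classical telescoping via the energy identity \eqref{eq.locener}, exactly as in \cite{St88,Gr90,ShSt93,ShSt94,BLP1}. On the boundary it is the genuinely Neumann difficulty: there is no uniform Lopatinskii condition, so $\nabla u|_{\partial\Omega}$ cannot be bounded, and one must instead use the trace information packaged in Proposition~\ref{propex} and \eqref{eq.faible} (and, upstream, the a priori trace bound of Proposition~\ref{prop.apriori2} that underlies the non-concentration). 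Granting this organization, summing the per-slab estimates gives $\|u\|^5_{(L^5;L^{10})(K^s_{t_1})}\lesssim_{E_0}\delta$ uniformly in $s<0$, the continuity argument then extends this to $s\uparrow0$, and we conclude $\|u\|_{(L^5;L^{10})(K^0_{t_1})}\lesssim_{E_0}\delta^{1/5}<\varepsilon$ for $\delta$ small. Everything except the reorganization of the boundary terms follows \cite{BLP1} essentially verbatim.
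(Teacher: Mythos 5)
Your overall strategy (non-concentration plus vanishing flux as the sources of smallness, Strichartz as the engine, a continuity/bootstrap to close) is the right high-level picture, but the execution diverges from the paper's in a way that leaves a genuine gap and also introduces unnecessary machinery.

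The key tool you never invoke is the extension Lemma~\ref{lem.extension}. Proposition~\ref{propex} is a \emph{global-in-$\Omega$} Strichartz estimate, while the only smallness available, $\sup_{t\le s<0}\int_{D_s}u^6\,dx\ll 1$, lives on the shrinking cone $K_t^0$. The paper resolves this mismatch by extending $u$ to $\check u$ on all of $\Omega$ with $\|\check u(s)\|_{L^6(\Omega)}\lesssim \|u(s)\|_{L^6(D_s)}$ and $\|\nabla\check u(s)\|_{L^2(\Omega)}\lesssim E_0^{1/2}$, solving $\Box w=-(\check u)^5$ with the same Cauchy data at time $t$, and using finite speed of propagation to identify $w$ with $u$ on $K_t^0$. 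Then a \emph{single} application of Proposition~\ref{propex} combined with~\eqref{eq.bonstrichartz} gives $\|u\|_{(L^5;L^{10})(K_t^{t'})}\lesssim E_0+\eta\,\|u\|^4_{(L^5;L^{10})(K_t^{t'})}$ with $\eta$ small once $t$ is close to $0$, and a continuity argument in $t'$ yields $\|u\|_{(L^5;L^{10})(K_t^{0})}\lesssim E_0<\infty$. \emph{Finiteness} is all that is needed: since $\|u\|_{(L^5;L^{10})(K_t^{0})}^5=\int_t^0\|u(s)\|^5_{L^{10}(D_s)}ds$ is a finite integral over a domain shrinking to a point, it tends to $0$ as $t\to 0^-$ by dominated convergence, and this is how the $\varepsilon$ in the statement is produced. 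You instead try to prove the stronger quantitative bound $\lesssim_{E_0}\delta^\theta$, which is not required.

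It is precisely this unnecessary strengthening that pushes you toward the dyadic-in-time decomposition and rescaling, and that is where the proposal comes apart. You acknowledge yourself that the ``hard part'' — showing the summed-over-$j$ linear and boundary contributions are controlled by the flux on $M^0_{t_1}$ and by $|t_1|E_0$ rather than by the (possibly non-small) local energy — is not carried out; ``Granting this organization'' is doing all the work of the proof. In the Neumann setting this is exactly where the argument would bite, because the failure of the uniform Lopatinskii condition prevents controlling $\nabla u$ on the boundary, and Proposition~\ref{prop.apriori2} only controls a specific null-form-type quadratic quantity, not the pieces that would appear in a slab-by-slab telescoping of Cauchy data. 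In addition, your rescaling replaces $\Omega$ by dilates $\Omega_j$ with $j\to\infty$, so the uniformity in $j$ of the constant in~\eqref{eq.proj} and hence in Proposition~\ref{propex} is itself a nontrivial claim that you assert without justification. None of these difficulties arise in the paper's extension-based argument, which avoids rescaling entirely. I would recommend dropping the dyadic decomposition, bringing in Lemma~\ref{lem.extension} explicitly, and arguing finiteness first, smallness by shrinking $t$ second.
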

\begin{proof}
We start with an extension result from \cite{BLP1} which applies
equally in our setting:
\begin{lem}\label{lem.extension} For any $x_0\in \overline{\Omega}$
  there exists $r_0>0$ such that for any $0<r<r_0$ and any $v\in
  H^1(\Omega)\cap L^p( \Omega)$, there exist 
a function $\widetilde v_r \in H^1( \Omega)$ (independent of the choice of $1\leq p \leq + \infty$), satisfying
\begin{equation}
\begin{gathered}
(\widetilde v_r -v) _{\mid |x-x_0|<r\cap \Omega } =0,\\
  \int_{\Omega}|\nabla \widetilde v|^2 \lesssim  \int_{ \Omega}|\nabla v|^2, \qquad \|\widetilde v_r\|_{L^p( \Omega)} \lesssim  \|v\|_{L^p( \{|x-x_0|<r\})}.
\end{gathered}
\end{equation}
In other words, we can extend functions in $H^1\cap L^p$ on the ball
$\{|x-x_0|<r\}$ to functions in $H^1(\Omega)\cap L^p( \Omega)$ with
uniform bounds with respect to (small) $r>0$, for the $H^1$ {\em and}
the $L^p$ norms respectively.

 Furthermore, for any $u\in L^\infty((-1,0); H^1_N( \Omega))\cap L^1_{\text{loc}}((-1,0); L^p( \Omega))$, there exist 
a function $\check u \in L^\infty((-1,0); H^1_N( \Omega))\cap L^1_{\text{loc}}((-1,0); L^p( \Omega))$, satisfying
  (uniformly with respect to~$t$)
\begin{equation}\label{eq.time}
\begin{gathered}
(\check u -u) _{\mid \{|x-x_0|<-t\}\cap \Omega} =0,\\
  \int_{\Omega}|\nabla \check u|^2(t, x) + |\partial_t \check u|^2(t,x) dx \lesssim  \int_{\Omega}|\nabla u|^2(t,x) + |\partial_t  u|^2(t,x)\\
 \|\check u(t, \cdot)\|_{L^p( \Omega)} \lesssim  \|u(t, \cdot)\|_{L^p(\Omega \cap  \{|x-x_0|<-t\})}\text{ $t$-a.s. }
\end{gathered}
\end{equation}
\end{lem}
\begin{proof}
See \cite{BLP1} where the boundary condition is easily modified to be
Neumann rather than Dirichlet.
\end{proof}
Let us come back to the proof of Proposition~\ref{prop.L5W}.
 Let $\check u$ be the function given by the second part of Lemma~\ref{lem.extension}.
Then $(\check u)^5 $ is equal to $u^5$ on $K_t^{0}$ and 
\begin{equation}
\begin{aligned}
\|(\check u)^5\|_{L^{\frac 5 4}((t,t'); L^\frac{30} {17}( \Omega)}&\leq \|\check u \|^4_{ L^5((t, t');L^{10}( \Omega))}\|\check u \|_{L^\infty((t,t'); L^6(\Omega)}\\
&\lesssim \|u \|^4_{ (L^5;L^{10})(K_t^{t'}))}\|u \|_{(L^\infty; L^6)(K_t^{0})}.
\end{aligned}
\end{equation}
On the other hand, $\nabla_x (\check u)^5=5 (\check u)^4 \nabla_x \check u$ and 
\begin{equation}
\begin{aligned}
\|\nabla_x(\check u)^5\|_{L^{\frac 5 4}((t,t'); L^\frac{10} {9}( \Omega)}&\leq 5\|\check u \|^4_{ L^5((t, t');L^{10}( \Omega))}\|\nabla_x\check u \|_{L^\infty((t,0); L^2(\Omega)}\\
&\lesssim \| u \|^4_{ (L^5;L^{10})(K_t^{t'})}\|u\|_{L^\infty; H^1(\Omega)}.
\end{aligned}
\end{equation}
By (complex) interpolation, as in ~\eqref{eq.bonstrichartz},
$$ 
\| (\check u)^5\|_{L^{\frac 5 4}((t, {t'}); W^{\frac{7}{10}, \frac {5} 4}( \Omega))}\lesssim \| u \|^4_{ (L^5;L^{10})(K_t^{t'})}\|{u}\|^{\frac 7 {10}}_{L^\infty; H^1(\Omega)}\|u \|^{\frac 3 {10}}_{(L^\infty; L^6)(K_t^{0})}.
$$
Let $w$ be the solution (which, by finite speed of propagation, coincides with $u$ on $K_t^{0}$) of 
$$
 (\partial_s^2 - \Delta )w =-(\check{u})^5,\qquad \partial_n w_{\mid \partial \Omega} =0,\qquad (w-u) _{\mid s=t} =\partial_s( w-u) _{\mid s=t}=0,
$$
applying~\eqref{eq.besov}, and the Sobolev embedding $W^{\frac 3
  {10},5}(\Omega)\mapsto L^{10} ( \Omega)$, we get
\begin{multline}
\label{eq.estim}
  \|u\|_{(L^5;L^{10}) (
  K_t^{t'})} \lesssim (\|w\|_{ (L^5((t, t'); W^{\frac 3 {10},5})(\Omega))}\\
   \lesssim  E (u) +\|u \|^4_{ (L^5; L^{10})(K_t^{t'})}\|u\|^{\frac 7
  {10}}_{L^\infty; H^1(\Omega)}\|u\|^{\frac 3 {10}}_{(L^\infty;
  L^6)(K_t^{0})}\,.
\end{multline}
Finally, from Proposition \ref{noconc}, \eqref{eq.estim} and the continuity of the mapping $
 t'\in [t, 0)\rightarrow \|u\|_{ (L^5; L^{10})(K_t^{t'})} $ (which takes value $0$ for $t'=t$), there
 exists $t$ (close to $0$) such that
$$ \forall t<t'<0\,;\,\,\, \|u\|_{ (L^5; L^{10})(K_t^{t'})}\lesssim  E (u)
$$
and passing to the limit $t' \rightarrow 0$, 
$$\|u\|_{ (L^5; L^{10})(K_t^{0})}\leq 2C E (u) .$$

As a consequence, taking $t<0$ even smaller if necessary, we obtain
\begin{equation}\label{eq.small}
 \|u\|_{ (L^5; L^{10})(K_t^{0})}\leq \varepsilon.
\end{equation}
\end{proof}
 \subsubsection{Global existence}
We are now ready to prove the global existence result.
Let $t<t_0=0$ be close to $0$ and let $v$ be the solution to the linear equation
$$
(\partial_s^2- \Delta) v=0, \qquad \partial_n v_{\mid \partial \Omega} =0,\qquad (v-u)_{\mid s=t} = 0, \qquad \partial_s (v-u) _{\mid s=t}=0,
$$
then the difference $w= u-v$ satisfies 
$$
(\partial_s^2- \Delta) w=- u^5, \qquad \partial_n w_{\mid \partial \Omega} =0, \qquad w_{\mid s=t} = 0, \qquad \partial_s w _{\mid s=t}=0.
$$
Let $\check u$ be the function given by Lemma~\ref{lem.extension} from ${u}$. We have 
$$
\|\check{u} \|_{L^5((t,0);L^{10} ( \Omega))}\lesssim  \varepsilon, \qquad \|\check u \|_{L^\infty; H^1}\lesssim E(u)\,.
$$
Let $\widetilde{w}$ be the solution to
$$
(\partial_s^2- \Delta) \widetilde{w}=- \check{u}^5, \qquad \partial_n \widetilde
{w}_{\mid \partial \Omega} =0, \qquad\widetilde{w}_{\mid s=t} = 0, \qquad
\partial_s \widetilde{w} _{\mid s=t}=0.
$$
By finite speed of propagation, $w$ and $\widetilde{w}$ coincide in $K_t^{0}$. On the other hand, using~\eqref{eq.faible} yields
\begin{multline}
\label{fini}
\|\widetilde{w} \|_{L^\infty((t,0);H^1)} + \|\partial_s \widetilde{w} \|_{L^\infty((t,0); L^2( \Omega))} + \|\widetilde{w} \|_{L^5((t,0); W^{\frac 3 {10},5}( \Omega))}\\
 \lesssim  \|\check{u}^5\|_{L^{1}((t,0); L^2( \Omega)}\lesssim  \|\check{u}\|^5_{L^5((t,0); L^{10} ( \Omega))}\lesssim \varepsilon^5.
\end{multline}
Finally, for any ball $D$, denote by 
$$ 
E(f(s, \cdot), D)= \int_{D\cap \Omega} (|\nabla_x f|^2 + |\partial_s f|^2 + \frac {|f|^6} 3)(s,x) dx;
$$ 
since $v$ is a solution to the linear equation,
\begin{equation}
  \label{eq:lineaire}
  E(v(s, \cdot), D(x_0=0, -s)) \rightarrow 0, \qquad s \rightarrow 0^-
\end{equation}
Recalling that $u=v+\tilde w$ inside $K^0_t$, we obtain from
\eqref{eq:lineaire} and \eqref{fini} (and the Sobolev injection $H^1_N ( \Omega \rightarrow L^6( \Omega)$) that there exists a small $s<0$
such that
$$ 
E(u(s, \cdot), D(x_0=0, -s)) < \varepsilon;
$$
but, since $(u, \partial _s u) (s, \cdot)\in H^1_N( \Omega) \times L^2( \Omega)$, we have,  by dominated convergence,
\begin{multline*}
 E(u(s, \cdot), D(x_0=0, -s))= \int_{\Omega  }1_{ \{|x-x_0|< -s\}} (x)  (|\nabla u( s, x) |^2 + |\partial_s u (s,x)|^2+ \frac {|u|^6(s, x)} 3)) dx\\
\text{ and } \lim_{ \alpha \rightarrow 0}\int_{\Omega} 1_{\{|x-x_0|<\alpha -s\} }(x) (|\nabla u( s, x) |^2 + |\partial_s u (s,x)|^2+ \frac {|u|^6(s,x)} 3)) dx\\
= \lim_{\alpha \rightarrow 0} E(u(s, \cdot), D(x_0=0, -s+\alpha))\,;
\end{multline*}
consequently, there exists $\alpha>0$ such that 
$$ 
E(u(s, \cdot), D(x_0=0, -s+\alpha))\leq 2\varepsilon.
$$
Now, according to~\eqref{eq.locener}, the $L^6$ norm of $u$ remains smaller than $2\varepsilon$ on $\{|x-x_0|<\alpha -s'\},s\leq s' <0$. As a consequence, the same proof as for Proposition~\ref{prop.L5W} shows that the $L^5; L^{10}$ norm of the solution on the truncated cone 
$$ K= \{(x,s'); |x-x_0|<\alpha-s', s<s'<0\}$$
is bounded. Since this is true for all $x_0\in \overline{\Omega}$, a compactness argument shows that 
$$ \|u\|_{L^5((s, 0); L^{10} ( \Omega))} < + \infty$$
which, by Duhamel formula shows that 
$$\lim_{s'\rightarrow 0^-} ( u, \partial_{s} u)(s', \cdot)$$ exists in $(H^1_N ( \Omega) \times L^2( \Omega))$  and consequently $u$ can be extended for $s'>0=t_0$ small enough, using Corollary~\ref{cor.1}.
\begin{figure}[ht]
$$\ecriture{\includegraphics[width=5cm]{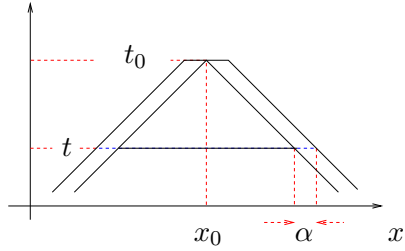}}
{\aat{25}{-2}{$x_0$}\aat{8}{9}{$t$}\aat{50}{-2}{$x$}\aat{38}{-2}{$\alpha$}\aat{16}{21}{$t_0$}}$$ \caption{The truncated cone}\label{fig.cones}
\end{figure}

\section*{Appendix}

To prove Proposition~\ref{propex}, we observe that according to Theorems A and~\ref{th.2}, the operator $\mathcal{T}= e^{\pm it \sqrt{ - \Delta_N}}$satisfies
\begin{equation}\label{eq.strichartz4bis}
\|\mathcal{T}u_0\|_{L^5((0,1) \times \Omega)} \lesssim  \|u_0\|_{H^{\frac 7 {10}}_N( \Omega)}.
\end{equation}
Applying the previous inequality to  $\Delta u_0$, and using the $L^p$ elliptic regularity result 
\begin{equation}
\begin{gathered}
 - \Delta u +u =f \in L^p( \Omega), \quad \partial_n u _{\mid \partial \Omega} =0 \Rightarrow u \in W^{2,p}( \Omega) \cap W^{1,p}_N( \Omega)\\
\text{ and }
\|u\|_{W^{2,p}( \Omega)}\lesssim  \|f\|_{L^p( \Omega)},\qquad 1 <p<+\infty 
\end{gathered}
\end{equation}
we get
\begin{equation}\label{eq.strichartz4}
 \|\mathcal{T}u_0\|_{L^5((0,1);W^{2,5}( \Omega)\cap W^{1,5}_N( \Omega))} \lesssim  \|u_0\|_{H_N^{\frac {27} {10}}( \Omega)}
\end{equation}
and consequently by (complex) interpolation between~\eqref{eq.strichartz4bis} and~\eqref{eq.strichartz4},
\begin{equation}\label{eq.strichartz6} 
 \|\mathcal{T}u_0\|_{L^5((0,1);W^{\frac 3 {10},5}_N (\Omega))} \lesssim  \|u_0\|_{H_N^{1}( \Omega)}\,;
\end{equation} 
finally, by Sobolev embedding
\begin{equation}\label{eq.strichartz8}
 \|\mathcal{T}u_0\|_{L^5((0,1);L^{10}( \Omega))} \lesssim  \|u_0\|_{H_N^{1}( \Omega)}.
\end{equation}
To conclude, we simply observe that
$$
 u= \cos( t\sqrt{ - \Delta_N}) u_0 + \frac { \sin(t\sqrt{ - \Delta_N})}{ \sqrt{ - \Delta_N}} u_1
$$
and $1/\sqrt{- \Delta_N}$ is an isometry from $L^2( \Omega)$ to
$H^1_N( \Omega)$, providing the result.

  The usual $T T^\star$ argument and Christ-Kiselev
  Lemma~\cite{ChKi01} proves the following:

\begin{proof}
We have 
$$
u(t, \cdot)= \cos( t\sqrt{ - \Delta_N})u_0+ \frac{\sin( t\sqrt{ - \Delta_N})} {\sqrt{- \Delta_N}} u_1 + \int_0^t\frac{\sin( (t-s)\sqrt{ - \Delta_N})} {\sqrt{- \Delta_N}}f(s, \cdot) ds.
$$
 The contributions of $(u_0,u_1)$ are easily dealt with, as previously. Let us focus on the contribution of
$$
\int_0^t \frac{e^{i (t-s)\sqrt{ - \Delta_N}}} {\sqrt{- \Delta_N}}f(s, \cdot) ds.
$$ 
Denote by $\mathcal{T}= e^{it \sqrt{-\Delta_N}}$; interpolating between~\eqref{eq.strichartz4bis} and~\eqref{eq.strichartz4}, 
$$
\|\mathcal{T} f\|_{L^5((0,1); W^{2- \frac 7 {10},5}( \Omega)\cap W_N^{1,5}( \Omega))}\lesssim  \|f\|_{H^2_N( \Omega)}.
$$
 Let $u_0\in L^2$. Then there exist $v_0\in H^2_N( \Omega)$ such that 
$$
-\Delta v_0 = u_0, \qquad \|u_0\|_{L^2}\sim \|v_0\|_{H^2};
$$
as a consequence, from $\mathcal{T}u_0= \Delta \mathcal{T}v_0$,
\begin{align*}
 \|\mathcal{T}u_0\|_{L^5((0,1); W^{-\frac 7{10},5}( \Omega))} & \lesssim 
 \|\mathcal{T}v_0\| _{L^5((0,1);W^{2- \frac 7 {10},5}( \Omega))\cap
 W_N^{1,5}( \Omega))}\\
 & \lesssim  \|v_0\|_{H^{2}_N( \Omega)}\sim \|u_0\|_{L^2( \Omega)}.
\end{align*}
By duality we deduce that the operator $\mathcal{T}^*$ defined by
$$
\mathcal{T}^* f= \int_0^1 e^{-is\sqrt{- \Delta}} f(s, \cdot) ds
$$ is bounded from $L^{\frac{5}{4}}((0,1); W^{\frac 7{10},\frac{5}{4}}( \Omega))$ to
$L^2( \Omega)$ (observe that $ W^{\frac 7{10},\frac{5}{4}}( \Omega)=  W_N^{\frac
  7{10},\frac{5}{4}}( \Omega)$); using~\eqref{eq.strichartz6} and boundedness of
 $\sqrt{-\Delta_N}^{-1} $ from $L^2$ to $H^1_N$, we
obtain 
$$
\|{\mathcal{T} {(\sqrt{- \Delta_N})^{-1}}\mathcal{T}^*}
f\|_{L^{5}((0,1); W^{\frac 3{10},5}_N( \Omega))\cap
  L^\infty((0,1); H^1_N( \Omega))}\lesssim \|f\|_{L^{\frac{5}{4}}((0,1); W^{\frac 7{10},\frac{5}{4}}_N(
  \Omega))},
$$
and 
$$
\|\partial_t {\mathcal{T}} (\sqrt{-\Delta_N})^{-1}\mathcal{T}^*f\|_{L^\infty((0,1); L^2( \Omega))}\leq
C\|f\|_{L^{\frac{5}{4}}((0,1); W^{\frac 7{10},\frac{5}{4}}_N(
  \Omega))}.
$$
 But
$${\mathcal{T}} (\sqrt{- \Delta_N})^{-1}\mathcal{T}^*f(s, \cdot)= \int_0^1 \frac{e^{i(t-s)\sqrt{- \Delta_N}}} {\sqrt{- \Delta_N}} f(s, \cdot) ds
$$
and an application of Christ-Kiselev lemma~\cite{ChKi01} allows to transfer this property to the operator
$$
f \mapsto \int_0^t \frac{e^{i(t-s)\sqrt{- \Delta_N}}} {\sqrt{- \Delta_N}} f(s, \cdot) ds.  $$
\end{proof}


\begin{thebibliography}{10}

\bibitem{An05}
Ramona Anton.
\newblock Strichartz inequalities for Lipschitz metrics on manifolds and
  nonlinear {S}chr{\"o}dinger equation on domains, 2005.
\newblock preprint, {\tt arXiv:math.AP/0512639}.

\bibitem{BuGeTz01-1}
Nicolas Burq, Patrick G{\'e}rard, and Nikolay Tzvetkov.
\newblock Strichartz inequalities and the nonlinear {S}chr\"odinger equation on
  compact manifolds.
\newblock {\em Amer. J. Math.}, 126(3):569--605, 2004.

\bibitem{BLP1}
Nicolas Burq, Gilles Lebeau, and Fabrice Planchon.
\newblock Global existence for energy critical waves in 3-D domains, 2006.
\newblock preprint, {\tt arXiv:math.AP/0607631}.

\bibitem{ChKi01}
Michael Christ and Alexander Kiselev.
\newblock Maximal functions associated to filtrations.
\newblock {\em J. Funct. Anal.}, 179(2):409--425, 2001.

\bibitem{Gr90}
Manoussos~G. Grillakis.
\newblock Regularity and asymptotic behaviour of the wave equation with a
  critical nonlinearity.
\newblock {\em Ann. of Math. (2)}, 132(3):485--509, 1990.

\bibitem{TaAp96}
Sergiu Klainerman and Matei Machedon.
\newblock Remark on {S}trichartz-type inequalities.
\newblock {\em Internat. Math. Res. Notices}, (5):201--220, 1996.
\newblock With appendices by Jean Bourgain and Daniel Tataru.

\bibitem{Leproc06}
Gilles Lebeau.
\newblock Estimation de dispersion pour les ondes dans un convexe.
\newblock In {\em Journ\'ees ``\'Equations aux D\'eriv\'ees Partielles''
  (Evian, 2006)}. 2006.

\bibitem{MSS93}
Gerd Mockenhaupt, Andreas Seeger, and Christopher~D. Sogge.
\newblock Local smoothing of {F}ourier integral operators and
  {C}arleson-{S}j\"olin estimates.
\newblock {\em J. Amer. Math. Soc.}, 6(1):65--130, 1993.

\bibitem{Ra81}
Jeffrey Rauch.
\newblock I. {T}he {$u\sp{5}$} {K}lein-{G}ordon equation. {II}. {A}nomalous
  singularities for semilinear wave equations.
\newblock In {\em Nonlinear partial differential equations and their
  applications. Coll\`ege de France Seminar, Vol. I (Paris, 1978/1979)},
  volume~53 of {\em Res. Notes in Math.}, pages 335--364. Pitman, Boston,
  Mass., 1981.

\bibitem{ShSt93}
Jalal Shatah and Michael Struwe.
\newblock Regularity results for nonlinear wave equations.
\newblock {\em Ann. of Math. (2)}, 138(3):503--518, 1993.

\bibitem{ShSt94}
Jalal Shatah and Michael Struwe.
\newblock Well-posedness in the energy space for semilinear wave equations with
  critical growth.
\newblock {\em Internat. Math. Res. Notices}, (7):303ff., approx.\ 7 pp.\
  (electronic), 1994.

\bibitem{SmSo95}
Hart~F. Smith and Christopher~D. Sogge.
\newblock On the critical semilinear wave equation outside convex obstacles.
\newblock {\em J. Amer. Math. Soc.}, 8(4):879--916, 1995.

\bibitem{SmSo06}
Hart~F. Smith and Christopher~D. Sogge.
\newblock On the $L^p$ norm of spectral clusters for compact manifolds with
  boundary, 2006.
\newblock to appear, Acta Matematica, {\tt arXiv:math.AP/0605682}.

\bibitem{St88}
Michael Struwe.
\newblock Globally regular solutions to the {$u\sp 5$} {K}lein-{G}ordon
  equation.
\newblock {\em Ann. Scuola Norm. Sup. Pisa Cl. Sci. (4)}, 15(3):495--513
  (1989), 1988.

\bibitem{TaoYM}
Terence Tao.
\newblock Local well-posedness of the {Y}ang-{M}ills equation in the temporal
  gauge below the energy norm.
\newblock {\em J. Differential Equations}, 189(2):366--382, 2003.

\bibitem{Ta04}
Daniel Tataru.
\newblock Strichartz estimates for second order hyperbolic operators with
  nonsmooth coefficients. {III}.
\newblock {\em J. Amer. Math. Soc.}, 15(2):419--442 (electronic), 2002.

\bibitem{Tataru-trace}
Daniel Tataru.
\newblock On the regularity of boundary traces for the wave equation.
\newblock {\em Ann. Scuola Norm. Sup. Pisa Cl. Sci. (4)}, 26(1):185--206, 1998.



\end{thebibliography}
\end{document}